\numberwithin{equation}{section}
\theoremstyle{plain}
\newtheorem{theorem}{Theorem}[section]
\newtheorem{lemma}[theorem]{Lemma}
\newtheorem{proposition}[theorem]{Proposition}
\newtheorem{corollary}[theorem]{Corollary}
\theoremstyle{definition}
\newtheorem{definition}[theorem]{Definition}
\newtheorem{remark}[theorem]{Remark}
\let\c@equation\c@theorem  
\newcommand{\C}{\mathbb{C}}
\newcommand{\id}{\text{id}}
\newcommand{\End}{\text{End}}
\begin{document}

\title[Explicit representations of Sklyanin algebras at a point of order 2]{Explicit Representations of 3-dimensional Sklyanin algebras associated to a point of order 2}
\author{Daniel J. Reich and Chelsea Walton}

\address{Reich: Department of Mathematics, North Carolina State University, Raleigh, North Carolina 27695, USA}
\email{djreich@ncsu.edu }

\address{Walton: Department of Mathematics, Temple University, Philadelphia, Pennsylvania 19122, USA}
\email{notlaw@temple.edu}

\bibliographystyle{abbrv}

\begin{abstract}
The representation theory of a 3-dimensional Sklyanin algebra $S$ depends on its (noncommutative projective algebro-) geometric data: an elliptic curve $E$ in $\mathbb{P}^2$, and an automorphism $\sigma$ of $E$ given by translation by a point. Indeed, by a result of Artin-Tate-van den Bergh, we have that $S$ is module-finite over its center if and only if $\sigma$ has finite order. In this case, all irreducible representations of $S$ are finite-dimensional and of at most dimension $|\sigma|$. 

In this work, we provide an algorithm in Maple to directly compute all irreducible representations of $S$ associated to $\sigma$ of order 2, up to equivalence.  Using this algorithm, we compute and list these representations. 
To illustrate how the algorithm developed in this paper can be applied to other algebras, we use it to recover well-known results about irreducible representations of the skew polynomial ring $\mathbb{C}_{-1}[x,y]$.
\end{abstract}

\subjclass[2010]{16S38, 16G99, 16Z05}

\keywords{Azumaya locus, irreducible representation, Maple algorithm, three-dimensional Sklyanin algebra}

\maketitle


\section{Introduction}

We work over the ground field $\C$. The motivation of this work is to study, up to equivalence, irreducible finite-dimensional representations (irreps) of {\it Sklyanin algebras}  $S$ of global dimension 3 [Definition~\ref{def:Sklyanin}]. Past work on this problem include results on bounds on the dimension of irreps of $S$ \cite{Walton}, and on a geometric parametrization of (trace-preserving) irreps of $S$ \cite{DeLaetLeBruyn}. The focus of this paper is to determine, for a class of Sklyanin algebras,  all  {\it explicit} irreps up to equivalence. Namely, we compute {\it irreducible matrix solutions} to the defining equations of $S$, up to an action of a general linear group. A geometric parametrization of the set of irreps of $S$ is also presented, as this is the typical approach to understanding aspects of Sklyanin algebras.

\begin{remark} \label{rem:intro} We directly compute the irreps via a Maple algorithm.  A more conceptual technique, using noncommutative projective algebraic geometry (and Clifford theory for these particular Sklyanin algebras), can be used to solve this problem.  We nevertheless hold to the computational approach because it can be adapted (much more easily in some cases) to other algebras; for further discussion of the complexity of this approach, see Remarks \ref{rem:apply} and~\ref{rem:apply2}.  
\end{remark}

To begin, let us define the algebra under investigation.

\begin{definition} \cite{ATV1} \label{def:Sklyanin} 
The {\it 3-dimensional Sklyanin algebra} $S:=S(a,b,c)$ over $\mathbb{C}$ is generated by three non-commuting variables $x$, $y$, $z$ subject to the following relations:
\begin{equation} \label{eq:Srelations}
ayz+bzy+cx^2~ = ~azx+bxz+cy^2 ~= ~axy+byx+cz^2  ~= ~0.
\end{equation}
Here,  $[a:b:c] \in \mathbb{P}_\mathbb{C}^2$, with $abc \ne 0$ and $(3abc)^3 \ne (a^3+b^3+c^3)^3$.  
\end{definition}

This algebra is rather resistant to noncommutative Gr\"{o}bner basis methods, that is, it is difficult to write down a $\mathbb{C}$-vector space basis of $S$ (consisting of monomials in $x,y,z$);  see, for instance,  \cite[Exercise~1.7]{Rogalski}. (The reader may also be interested in recent work of Iyudu-Shrakin \cite{IyuduShkarin}.) In fact, it is common practice to consider the geometric data of $S$ in the context of  Noncommutative Projective Algebraic Geometry \cite{ATV1,Rogalski, StaffordvandenBergh} to analyze its ring-theoretic behavior. By \cite[Equations~1.6 and~1.7]{ATV1}, the geometric data of $S(a,b,c)$ consists of an elliptic curve $E:=E_{a,b,c} \subset \mathbb{P}_\mathbb{C}^2$ defined the equation, 
\begin{equation} \label{eq:E}
E_{a,b,c}: ~~(a^3+b^3+c^3)(uvw)-(abc)(u^3+v^3+w^3) = 0,
\end{equation} 
and an automorphism of this elliptic curve $\sigma:=\sigma_{a,b,c}$ given by 
\begin{equation} \label{def:sigma}
\sigma_{a,b,c}([u:v:w]) = [acv^2-b^2uw~:~bcu^2-a^2vw~:~abw^2-c^2uv].
\end{equation}
Here, the automorphism is given by translation of the point $[a:b:c] \in E_{a,b,c}$, where $[1:-1:0]$ is the origin of $E_{a,b,c}$. The \textit{order} of $\sigma$, denoted $|\sigma|$, is the smallest $n \in \mathbb{N}$ such that $\sigma^n = \id_{E}$.  If no such $n$ exists, then $|\sigma|=\infty$.  Consider the following terminology.

\begin{definition}
We say that a Sklyanin algebra $S(a,b,c)$ is {\it associated to a point \textnormal{(}\text{$[a:b:c] \in E_{a,b,c}$}\textnormal{)} of order~$n$} if the automorphism $\sigma_{a,b,c}$ has order $n$.
\end{definition}

\noindent The role of this geometric data for our work will be explained towards the end of this section.

Now let us recall some basic representation theory terminology. Take $n$ to be a positive integer.
   An \textit{$n$-dimensional representation} of $S:=S{(a,b,c)}$ is an algebra homomorphism $\psi : S \to \End(V)$ where $V$ is a  $\mathbb{C}$-vector space of dimension $n$.  Since $\End(V)$ is isomorphic to $Mat_n(\mathbb{C})$, there is a one-to-one correspondence between the $n$-dimensional representations of $S{(a,b,c)}$ and the $n\times n$ {\it matrix solutions} $(X,Y,Z)$ to the system of equations \eqref{eq:Srelations}.  Here, $X = \psi(x)$, $Y= \psi(y)$, and $Z= \psi(z)$.
   
Next, we discuss irreducibility. Given a representation $\psi : S \to \End(V)$, a subspace $W$ of $V$ is called $S$-{\it stable} if $\psi(s)(w) \in W$, for all $s \in S$, $w \in W$. Such a subspace $W$ yields a {\it sub-representation} of $S$, given as $\psi': S \to \End(W)$. We say that  $\psi$ is \textit{irreducible} if the only $S$-stable subspaces of $V$ are $\{0\}$ and itself, that is, if there are no proper sub-representations $\psi'$ of $\psi$. Similarly, there is a notion of irreducibility for a matrix solution $(X,Y,Z)$ to equations \eqref{eq:Srelations}; see Lemma~\ref{lem:irred}. 

Now we recall when two representations/ matrix solutions of $S$ are equivalent.
We say that $n$-dimensional representations $\psi, \phi: S \to \End(V)$  are {\it equivalent} if there exists a matrix $Q \in GL_n(\C)$ so that $\psi(s) = Q \phi(s) Q^{-1}$, for all $s \in S$. Likewise, two matrix solutions $(X_0,Y_0,Z_0)$ and $(X_1,Y_1,Z_1)$ to \eqref{eq:Srelations} are \textit{equivalent} if there exists $Q \in GL_n(\C)$ such that $Q^{-1}X_0Q=X_1$, $Q^{-1}Y_0Q=Y_1$, and $Q^{-1}Z_0Q=Z_1$. Note that two equivalent representations/ matrix solutions are  either both irreducible or both reducible.

As the reader can imagine, studying explicit finite-dimensional representations of the algebras $S(a,b,c)$ is difficult computationally. Now by \cite[Theorem~1.3]{Walton}, we only have non-trivial finite-dimensional representations of $S$ when the automorphism $\sigma$ of  \eqref{def:sigma}  has finite order.  So, we refine our goal: we study the irreps of $S(a,b,c)$ associated to a point $[a:b:c] \in E_{a,b,c}$ of order 2. (Note that the order 1 case is precisely the case when $S$ is commutative [Lemma~\ref{order1}].)

\begin{lemma}[Lemma~\ref{lem:order2}]
A Sklyanin algebra $S(a,b,c)$ is associated to a point $[a:b:c] \in E_{a,b,c}$ of order~2 if and only if $a=b$. \end{lemma}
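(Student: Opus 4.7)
The plan is to translate the statement about the order of $\sigma$ into the statement that the point $P:=[a:b:c]$ has order $2$ in the group $(E_{a,b,c},O)$ with identity $O:=[1:-1:0]$, and then to characterize $2$-torsion points of $E_{a,b,c}$ by a concrete equation in the coefficients $a,b,c$.

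First, since $\sigma=\sigma_{a,b,c}$ is defined as translation by $P$ on $(E_{a,b,c},O)$, the order of $\sigma$ equals the order of $P$ in this group. I would start by verifying this setup explicitly, checking that $\sigma(O)=P$ (just plug $[u:v:w]=[1:-1:0]$ into \eqref{def:sigma}). Also, because the Sklyanin hypothesis $abc\neq 0$ forces $c\neq 0$, we automatically have $P\neq O$; so $|\sigma|=2$ is equivalent to $2P=O$ in the group law.

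Next, I would use the standard geometric fact that, for a smooth plane cubic with identity $O$ and group law determined by collinearity, a point $P\in E$ satisfies $2P=O$ if and only if the projective tangent line $T_P E$ passes through $O$. Writing $F(u,v,w)=(a^3+b^3+c^3)uvw-(abc)(u^3+v^3+w^3)$ for the defining polynomial of $E$ in \eqref{eq:E}, this tangency-through-$O$ condition is the single linear equation
\[
F_u(P)\cdot 1 + F_v(P)\cdot(-1)+F_w(P)\cdot 0 = 0.
\]
A direct computation of $F_u,F_v$ at $P=[a:b:c]$ and a factoring step should collapse this to
\[
(b-a)\bigl[(a+b)^3+c^3\bigr]=0,
\]
so that $P$ is a nontrivial $2$-torsion point if and only if $a=b$ or $(a+b)^3+c^3=0$.

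The main remaining (and main conceptual) step is ruling out the second factor under the Sklyanin hypotheses. Assume $(a+b)^3+c^3=0$, i.e.\ $c^3=-(a+b)^3$. Then
\[
a^3+b^3+c^3 = a^3+b^3-(a+b)^3 = -3ab(a+b),
\]
so cubing yields $(a^3+b^3+c^3)^3=-27(ab)^3(a+b)^3=27(ab)^3c^3=(3abc)^3$, contradicting the smoothness hypothesis $(3abc)^3\neq(a^3+b^3+c^3)^3$ in Definition~\ref{def:Sklyanin}. Hence $|\sigma|=2$ forces $a=b$, and conversely $a=b$ forces $P$ to be a nontrivial $2$-torsion point (by the tangent-line criterion and $P\neq O$), so $|\sigma|=2$. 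The anticipated obstacle is purely bookkeeping in the tangent-line computation and the verification that the ``spurious'' second factor is incompatible with the genus-one condition on $E_{a,b,c}$; once those are in hand, the equivalence is immediate.
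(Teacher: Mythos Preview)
Your proposal is correct and takes a genuinely different route from the paper. The paper's proof is purely computational and in keeping with its stated philosophy (Remark~1.1): it normalizes $a=1$, computes $\sigma^2([u:v:w])=[f_2:g_2:h_2]$ symbolically in Maple, extracts the coefficients of the polynomial identities encoding $[f_2:g_2:h_2]=[u:v:w]$, and solves; Maple returns $b=1$ with $c$ free, and the converse is then verified by direct substitution. You instead use the group law on $E_{a,b,c}$: since $\sigma$ is translation by $P=[a:b:c]$ and $c\neq 0$ forces $P\neq O$, the condition $|\sigma|=2$ becomes $2P=O$, which you detect via the tangent-line-through-$O$ criterion, reduce to the factored equation $(b-a)\bigl[(a+b)^3+c^3\bigr]=0$, and then eliminate the second factor by showing it forces $(a^3+b^3+c^3)^3=(3abc)^3$, contradicting the Sklyanin hypothesis. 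Your argument is hand-checkable, explains \emph{why} the answer is $a=b$, and makes transparent the role of the smoothness condition on $E$; the paper's version requires no elliptic-curve input beyond the definition of $\sigma$ and is reproducible by anyone with Maple. Both are short; yours has more conceptual content, theirs is self-contained at the level of polynomial identities.
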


In this case,  we assume that $a = b=1$ by rescaling. Therefore, our goal is to study the representation theory of the 3-dimensional Sklyanin algebra $S(1,1,c)$, where by Definition~\ref{def:Sklyanin}, $c \neq 0, ~c^3 \neq 1, -8$. By Lemma~\ref{lem:irrep2}, all 1-dimensional irreps of $S(1,1,c)$ are trivial, and all irreps of $S(1,1,c)$ are finite-dimensional, of at most dimension 2. Thus, we only need to compute the irreps of dimension 2; we achieve this as follows.

\begin{theorem}  \label{thm:intro}
The non-trivial explicit irreps (or matrix solutions) of the 3-dimensional Sklyanin algebra $S(1,1,c)$ are of dimension 2. They are classified up to equivalence; the representatives of equivalence classes of irreps of $S(1,1,c)$ are provided in Tables 3 and 4 in Sections~\ref{sec:1Jordanblock} and~\ref{sec:2Jordanblock}, respectively.
\end{theorem}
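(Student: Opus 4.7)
The plan is to combine Lemma~\ref{lem:irrep2}, which already reduces the task to classifying the $2$-dimensional irreps, with a direct normal-form analysis of triples $(X,Y,Z)$ of $2\times 2$ matrices satisfying the defining relations~\eqref{eq:Srelations} with $a=b=1$, namely
\begin{align*}
YZ+ZY+cX^2 &= 0, \\
ZX+XZ+cY^2 &= 0, \\
XY+YX+cZ^2 &= 0.
\end{align*}
Since simultaneous conjugation by $GL_2(\C)$ on the triple $(X,Y,Z)$ implements equivalence of matrix solutions, the first move is to exploit this freedom to put $X$ into Jordan normal form. In dimension~$2$ the resulting dichotomy is precisely the case split signaled by the titles of Sections~\ref{sec:1Jordanblock} and~\ref{sec:2Jordanblock}: either $X$ consists of a single $2\times 2$ Jordan block, or $X$ is diagonalizable (that is, has two Jordan blocks).

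In each case I would parameterize $Y$ and $Z$ by their four entries each, substitute into the three matrix relations to obtain a polynomial system in eight unknowns with parameter $c$ (subject to $c\ne 0$ and $c^3\ne 1,\,-8$), and then solve. The residual freedom is the stabilizer in $GL_2(\C)$ of the chosen normal form of $X$: the subgroup of scalar-times-upper-unipotent matrices when $X$ is a single Jordan block, and the diagonal torus (enlarged by the coordinate swap when the two eigenvalues coincide or match in the symmetric way) when $X$ is diagonal. I would use this residual action to further normalize entries of $Y$, then solve for the remaining entries of $Y$ and $Z$ from the relations. Solutions for which $X$, $Y$, $Z$ share a common invariant line are discarded as reducible, and irreducibility of the remaining solutions is verified via the criterion in Lemma~\ref{lem:irred}.

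The main obstacle is the combinatorial explosion of the polynomial system: three matrix equations in $2\times 2$ matrices produce twelve scalar equations whose solution variety splits into several components corresponding to distinct equivalence classes, which is precisely why the Maple algorithm is invoked. The theorem itself then reduces to two separate assertions: (i) \emph{completeness} of the enumeration, which follows from the Jordan-form dichotomy above together with exhaustiveness of the Maple solve step within each case; and (ii) \emph{pairwise inequivalence} of the representatives listed in Tables~3 and~4, which is checked case-by-case by comparing conjugation invariants such as the traces, determinants, and characteristic polynomials of $X$, $Y$, $Z$ and of simple words in them.
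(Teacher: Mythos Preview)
Your proposal follows the same overall architecture as the paper's proof: reduce to dimension~$2$ via Lemma~\ref{lem:irrep2}, put $X$ in Jordan form, solve the resulting polynomial system for the entries of $Y$ and $Z$, discard reducible solutions, and then sort the remainder into equivalence classes. This is exactly the content of Steps~0--2 and~3b of the algorithm in Sections~\ref{sec:methodology}--\ref{sec:2Jordanblock}.

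The one genuine methodological difference lies in how you propose to handle equivalence. The paper's Step~3a does not use conjugation invariants at all; instead, for each pair of solution families it \emph{directly solves} for a matrix $Q\in GL_2(\C)$ conjugating one into the other, and reads off from the solution set which parameters are free (hence can be normalized) and which families are subsumed by others. Your approach of comparing traces, determinants, and traces of words is conceptually cleaner and is well-founded for \emph{irreducible} tuples (trace invariants separate isomorphism classes of semisimple representations), but it is less well-suited to the bookkeeping the paper actually needs: deciding when an entire parametrized family is contained in another, and pinning down exactly which parameters may be set to~$1$ within a family. The paper's brute-force ``solve for $Q$'' handles both tasks uniformly inside Maple, at the cost of producing bulkier output that then has to be interpreted by hand. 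Either route leads to the same tables; yours is more elegant for certifying inequivalence of the final list, while the paper's is more mechanical for pruning redundant families along the way.
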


 In Section~\ref{sec:prelim}, we provide background material and some preliminary results. In Section~\ref{sec:methodology}, we give an outline (Steps 0-2, 3a, 3b) of our algorithm to prove Theorem~\ref{thm:intro}. 
The algorithm then begins in Section~\ref{sec:allreps}, where we determine all of the 2-dimensional representations of $S(1,1,c)$, and exclude `families' of reducible representations; this is Steps 0-2 of the algorithm. In Sections~\ref{sec:1Jordanblock} and~\ref{sec:2Jordanblock}, we determine representatives of equivalence classes of 2-dimensional irreps of $S(1,1,c)$; this is Steps~3a and~3b of the algorithm.

The study of the irreps of $S(1,1,c)$ ends in Section~\ref{sec:geometry}, where for completion, we discuss a geometric parametrization of equivalence classes of  irreps of $S(1,1,c)$ (e.g., we illustrate the {\it Azumaya locus} of $S(1,1,c)$ over the center of $S(1,1,c)$). Namely we have the result below.

\begin{theorem}[Theorem~\ref{thm:geometry}]
The set of equivalence classes of irreps of $S(1,1,c)$ is in bijective correspondence with the points of  the 3-dimensional affine variety:
$$X_c:= \mathbb{V}(g^2 - c^2(u_1^3 + u_2^3 + u_3^3) - (c^3-4)u_1 u_2 u_3) \subseteq \mathbb{C}^4_{\{u_1,u_2,u_3,g\}}.$$
In particular, $X_c \setminus \{\underline{0}\}$ is the Azumaya locus of $S$ over its center (i.e., points of $X_c \setminus \{\underline{0}\}$ correspond to  2-dimensional irreps of $S$), and the origin of $X_c$ corresponds to the trivial representation of $S$.
\end{theorem}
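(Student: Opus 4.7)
The plan is to identify the center $Z$ of $S(1,1,c)$ explicitly, then use the central character map to construct the bijection. First, I would search for central elements using the defining relations in \eqref{eq:Srelations} (with $a=b=1$): one expects three degree-2 generators $u_1, u_2, u_3$ (candidates include $x^2, y^2, z^2$ up to linear combination, since the order-2 condition $\sigma^2 = \id$ suggests degree-$2$ invariants) and a degree-3 element $g$ (of the form $xyz + (\text{cyclic})$ or $[x,y]z$-type expression). After verifying these commute with $x,y,z$ via the relations, one computes the single syzygy among $u_1,u_2,u_3,g$ in $S$; this should be exactly $g^2 - c^2(u_1^3+u_2^3+u_3^3) - (c^3-4)u_1u_2u_3 = 0$, establishing an isomorphism $Z \cong \mathbb{C}[u_1,u_2,u_3,g]/(\text{that relation})$ and hence $\mathrm{Spec}\, Z \cong X_c$.

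Next I would construct the map from equivalence classes of irreps to $X_c$. Given an irrep $\psi: S \to \End(V)$, Schur's lemma forces $\psi$ to send each central element to a scalar, yielding a central character $\chi_\psi \in \mathrm{Spec}\, Z \cong X_c$. Equivalent representations differ by conjugation, which does not change the scalar values, so the map $[\psi] \mapsto (\chi_\psi(u_1),\chi_\psi(u_2),\chi_\psi(u_3),\chi_\psi(g))$ is well-defined and lands in $X_c$. The trivial representation (sending $x,y,z \mapsto 0$) hits the origin.

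For bijectivity, I would appeal to the explicit classification assembled in Tables 3 and 4. For each family of representatives $(X,Y,Z)$ there, one computes the scalars $\chi_\psi(u_i)$ and $\chi_\psi(g)$ directly (e.g., as $\tfrac{1}{2}\mathrm{tr}$ of the appropriate matrix expressions); matching these against the parameters defining the tables should show that distinct equivalence classes yield distinct points of $X_c$ (injectivity) and that every non-origin point of $X_c$ arises from some tabulated class (surjectivity). Because the construction of the tables already groups irreps by Jordan-block type of $X$, the verification naturally splits into the two cases of Sections~\ref{sec:1Jordanblock} and~\ref{sec:2Jordanblock}.

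Finally, for the Azumaya locus statement, I would invoke standard prime PI theory: the 3-dimensional Sklyanin algebra $S(1,1,c)$ is a prime Noetherian $Z$-algebra of PI degree $2$ (because the maximal irrep dimension equals~$2$ by Lemma~\ref{lem:irrep2}), so equivalence classes of irreps of maximal dimension are in bijection with points of the Azumaya locus $\mathcal{A}(S) \subseteq \mathrm{Max}\, Z$. The bijection above shows this locus coincides with $X_c \setminus \{\underline{0}\}$. The main obstacle is the first step: exhibiting the central generators and verifying the relation in $S(1,1,c)$, which is delicate because (as the excerpt notes) the algebra resists Gr\"obner methods; here I would lean on the explicit irrep data from the tables both to guess the generators and to verify the relation degree by degree.
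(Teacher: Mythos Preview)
Your overall architecture is sound, but there is one genuine gap and one substantive methodological difference from the paper.

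\textbf{The gap: presenting the center.} You propose to find $u_1,u_2,u_3,g$, check they are central, and compute ``the single syzygy'' among them, then conclude $Z \cong \mathbb{C}[u_1,u_2,u_3,g]/(F)$. But verifying that four elements are central and satisfy a relation does not, by itself, show they \emph{generate} $Z$, nor that $F$ is the \emph{only} relation. The paper closes this gap by invoking the structural theorems of Smith--Tate \cite{SmithTate} (Theorems~3.7, 4.6, 4.7), which guarantee in advance that $Z$ is generated by three algebraically independent degree-2 elements and one degree-3 element subject to a single degree-6 relation; the computational step then merely identifies which elements and which relation. Without such a structural input, your first step is incomplete, and you flag no substitute for it.

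\textbf{The different route: bijectivity.} For the bijection between irrep classes and points of $X_c$, you propose to read it off the explicit Tables~3 and~4 by computing central characters family-by-family. The paper does \emph{not} do this: it argues entirely abstractly via the chain $[\mathrm{Irrep}(S)] \leftrightarrow \mathrm{prim}(S) \leftrightarrow \mathrm{maxSpec}(S) \leftrightarrow \mathrm{maxSpec}(Z)$, using PI theory and the Skolem--Noether theorem (see \cite[I.13.5, III.1.1, III.1.6]{BGbook}), together with Lemma~\ref{lem:irrep2} to pin down which fiber is the origin. Your approach would work in principle but requires verifying surjectivity onto $X_c\setminus\{\underline 0\}$ by hand across all six families, which is more labor than the paper's route and makes the theorem logically dependent on the explicit classification. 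The paper's abstract argument is independent of Tables~3 and~4; the tables are only cross-checked against $X_c$ afterwards in a remark.

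A minor omission: the paper also verifies (via the Jacobian criterion) that $X_c\setminus\{\underline 0\}$ is exactly the smooth locus of $X_c$, which you do not mention.
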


\begin{remark} \label{rem:apply}
We would like to point out that one can adjust our algorithm to prove Theorem~\ref{thm:intro} to examine equivalence classes of irreps of other algebras with generators and relations, especially those that are module-finite over their center. Although, the run-time and complexity of the output of the algorithm is in direct correlation with the number of generators and relations of the algebra, along with the algebra's {\it polynomial identity degree} (if applicable).
\end{remark}

We illustrate the remark above in  Section~\ref{sec:skewpoly}, where we tailor our algorithm to examine irreps of the following skew polynomial ring:
$$\mathbb{C}_{-1}[x,y] := \mathbb{C}\langle x,y \rangle/(xy+yx).$$ 
Like $S(1,1,c)$, it is well-known that all irreps of $\mathbb{C}_{-1}[x,y] $ are finite-dimensional,  of dimension at most~2 [Lemma~\ref{lem:skew}(c)]. See Proposition~\ref{prop:skew} and Corollary~\ref{cor:skew} for the results on the representation theory of $\mathbb{C}_{-1}[x,y]$.

\medskip

Unless stated otherwise, computational results in this work are performed with the computer algebra system Maple\texttrademark (version~16). All code will be presented in ~{\tt typewriter typeface}, and are available on the authors' websites. 
\footnote{Reich: \url{http://www4.ncsu.edu/~djreich/index.html}. \hfill Walton: \url{https://math.temple.edu/~notlaw/research.html}.}

\begin{remark} \label{rem:apply2}
Part of the novelty of this work is that we obtain noncommutative algebraic/ representation theoreric results with Maple, which is a computer algebra system that is used typically  for commutative computations. We hope that in the future the task of determining equivalence classes of irreps of noncommutative algebras (presented by generators and relations) can be achieved easily using a computer algebra system that handles noncommutative Gr\"{o}bner bases, such as GAP \cite{GAP-GBNP}.
\end{remark}


\section{Preliminaries} \label{sec:prelim}

We begin with a result on the irreducibility of a representation/ matrix solution of a Sklyanin algebra $S=S(a,b,c)$. This result is well-known, and we will use it often without mention.
  
 \begin{lemma} \label{lem:irred} Let $\psi : S \to \End(V)$ be an $n$-dimensional representation of $S$, with corresponding matrix solution $(X,Y,Z)$ to the system of equations \eqref{eq:Srelations}. Then, the following are equivalent:
 \begin{enumerate}
 \item $\psi$ is irreducible;
 \item the corresponding $S$-module $V$ (where $S$ acts on $V$ via $\psi$) is simple;
 \item $\psi$ is surjective;
 \item $\psi(S)$ generates $\End(V) \cong Mat_n(\C)$ as a $\C$-algebra; and
\item every matrix in $Mat_n(\C)$ can be expressed as a noncommutative polynomial in $(X,Y,Z)$ over $\C$. \qed
 \end{enumerate}
 \end{lemma}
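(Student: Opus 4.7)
The plan is to cluster the five conditions into groups that are trivially equivalent, and then tie the two clusters together with Burnside's theorem (equivalently, the Jacobson density theorem in the finite-dimensional setting over the algebraically closed field $\C$).

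First I would dispatch the bookkeeping equivalences. The equivalence (1) $\Leftrightarrow$ (2) is immediate from the definitions: an $S$-stable subspace of $V$ under $\psi$ is the same data as an $S$-submodule of the module structure on $V$ induced by $\psi$, so $\psi$ has no proper nonzero $S$-stable subspace iff $V$ is simple. Next, since $\psi$ is an algebra homomorphism, $\psi(S)$ is already a $\C$-subalgebra of $\End(V)$; thus ``$\psi(S)$ generates $\End(V)$ as a $\C$-algebra'' is the same as ``$\psi(S) = \End(V)$'', which is precisely the surjectivity of $\psi$. This gives (3) $\Leftrightarrow$ (4). Finally, because $S$ is generated as a $\C$-algebra by $x,y,z$ (cf.~Definition~\ref{def:Sklyanin}) and $\psi(x) = X$, $\psi(y) = Y$, $\psi(z) = Z$, the image $\psi(S)$ coincides with the set of noncommutative $\C$-polynomials in $X,Y,Z$; under the identification $\End(V) \cong Mat_n(\C)$ this shows (4) $\Leftrightarrow$ (5). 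Thus (3), (4), (5) are mutually equivalent, and (1) $\Leftrightarrow$ (2).

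It remains to bridge the two clusters, i.e.~to prove (2) $\Leftrightarrow$ (4). For (4) $\Rightarrow$ (1), observe that if $\psi(S) = \End(V)$, then any $\psi(S)$-stable subspace $W \subseteq V$ is stable under all of $\End(V)$, which forces $W \in \{0, V\}$. The one substantive direction is (2) $\Rightarrow$ (4): here I would invoke Burnside's theorem, which states that if $A$ is a $\C$-subalgebra of $\End_\C(V)$ with $\dim_\C V = n < \infty$ such that $V$ is a simple $A$-module, then $A = \End_\C(V)$. Applying this to $A := \psi(S) \subseteq \End(V)$ (which is a subalgebra since $\psi$ is an algebra map, and which acts simply on $V$ by assumption~(2)) yields $\psi(S) = \End(V)$, which is (4).

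The only step with genuine content is the appeal to Burnside / Jacobson density; the remaining equivalences are formal consequences of the definitions and the fact that $S$ is generated by $x,y,z$. Since this lemma is standard and Burnside's theorem is classical, I would simply cite it rather than reprove it.
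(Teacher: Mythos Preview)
Your proof is correct. The paper itself does not prove Lemma~\ref{lem:irred}: it declares the result ``well-known'' and terminates the statement with a \qed, so there is no argument to compare against. Your write-up supplies exactly the standard justification one would expect---the definitional equivalences (1)$\Leftrightarrow$(2) and (3)$\Leftrightarrow$(4)$\Leftrightarrow$(5), together with Burnside's theorem (or Jacobson density over $\C$) for the only nontrivial implication (2)$\Rightarrow$(4)---so your proposal is strictly more informative than what the paper offers.
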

 
\noindent If any of the above conditions hold, we say that the matrix solution $(X,Y,Z)$ is {\it irreducible}.
 
On the other hand, we can determine when a matrix solution of $S$ is reducible by using Lemma~\ref{lem:irred}.

\begin{corollary} \label{cor:irred} An $n \times n$ matrix solution $(X,Y,Z)$ to  \eqref{eq:Srelations} (corresponding to a representation $\psi$ of $S$) is reducible if and only if there exists a subspace $W$ of $V$ of dimension $m<n$ with $X \cdot w, Y \cdot w, Z \cdot w \in W$ for all $w \in W$. Here, we embed $W$ into $V$ so that $\cdot$ is given by matrix multiplication. \qed
\end{corollary}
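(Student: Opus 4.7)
The plan is to derive this corollary as a direct translation of the definition of reducibility into the language of the generators $x,y,z$ of $S$. The crucial observation is that since $S$ is generated as a $\mathbb{C}$-algebra by $x,y,z$, a linear subspace of $V$ is $S$-stable if and only if it is stable under the three linear maps $X=\psi(x), Y=\psi(y), Z=\psi(z)$ (with the convention $V=\mathbb{C}^n$ viewed as column vectors and $\End(V)\cong\mathrm{Mat}_n(\mathbb{C})$ so that $\psi(s)(w) = \psi(s)\cdot w$ is matrix multiplication).

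For the forward direction, I would assume $\psi$ is reducible. By the definition recalled just before the statement, there exists a proper nonzero $S$-stable subspace $W$ of $V$, so $0 < \dim W < n$. Setting $m = \dim W$ and specializing the stability condition $\psi(s)(w) \in W$ to $s \in \{x,y,z\}$ immediately yields $X\cdot w,\, Y \cdot w,\, Z \cdot w \in W$ for all $w \in W$, which is the condition in the statement.

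For the reverse direction, I would assume there is a (nonzero, proper) subspace $W \subseteq V$ of dimension $m<n$ with $X\cdot W,\, Y\cdot W,\, Z\cdot W \subseteq W$, and show $W$ is $S$-stable. To do so, write an arbitrary $s \in S$ as a $\mathbb{C}$-linear combination of noncommutative monomials in $x,y,z$; since $\psi$ is an algebra homomorphism, $\psi(s)$ is the same linear combination of the corresponding products of $X,Y,Z$. The subspace $W$ is closed under addition, under $\mathbb{C}$-scalar multiplication, and by hypothesis under the action of each of $X,Y,Z$ individually; a short induction on monomial length therefore shows $W$ is closed under every noncommutative monomial in $X,Y,Z$, hence under every $\psi(s)$. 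So $W$ is $S$-stable and of intermediate dimension, producing the required proper subrepresentation and showing $\psi$ is reducible.

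The argument is essentially bookkeeping, so there is no substantive obstacle; the only point that warrants care is fixing the convention that $\End(V)\cong\mathrm{Mat}_n(\mathbb{C})$ acts by matrix multiplication on column vectors, and clarifying that in the statement ``dimension $m<n$'' the subspace $W$ is understood to be nonzero (otherwise the condition is vacuously satisfied and the equivalence with reducibility fails). With these conventions in place, the proof reduces to invoking the universal property that $S$ is generated by $x,y,z$ as a $\mathbb{C}$-algebra, together with the definition of reducibility in terms of $S$-stable subspaces.
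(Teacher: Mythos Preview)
Your argument is correct and is essentially what the paper has in mind: the corollary is stated with an immediate \qed and no proof, being treated as a direct consequence of the definition of reducibility (equivalently, the negation of Lemma~\ref{lem:irred}(a)$\Leftrightarrow$(b)) together with the fact that $S$ is generated by $x,y,z$. Your remark that $W$ must be taken nonzero for the equivalence to hold is a valid clarification of the statement.
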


If $S$ is a Sklyanin algebra associated to a point of infinite order, then by \cite[Theorem~1.3(i)]{Walton}, we have that all finite-dimensional irreps of $S$ are trivial. On the other hand, Sklyanin algebras associated to points of finite order have an interesting representation theory, due to the following result.

\begin{proposition} \label{prop:irrep} Let $S(a,b,c)$ be a Sklyanin algebra associated to a point of finite order. Then, all irreducible representations of $S(a,b,c)$ are finite-dimensional, of at most dimension $|\sigma_{a,b,c}|$.
\end{proposition}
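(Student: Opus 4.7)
The plan is to leverage the Artin--Tate--van den Bergh theorem recalled in the abstract: when $|\sigma_{a,b,c}| < \infty$, the Sklyanin algebra $S := S(a,b,c)$ is module-finite over its center $Z := Z(S)$, so $S$ is a polynomial identity (PI) algebra. Moreover, $Z$ is a finitely generated commutative $\C$-algebra, so by Hilbert's Nullstellensatz $Z/\mathfrak{m} \cong \C$ for every maximal ideal $\mathfrak{m}$ of $Z$. From this setup, both assertions of the proposition will follow from standard representation theory of affine PI algebras.

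For finite-dimensionality with the claimed bound, let $\psi : S \to \End(V)$ be irreducible and set $A := S/\ker \psi$. Then $A$ is a primitive PI algebra, so by Kaplansky's theorem $A \cong \text{Mat}_n(D)$ for some division ring $D$ finite-dimensional over its center $K$, with $n \leq \text{PI-deg}(S)$. Because $S$ is finitely generated as a $Z$-module, $A$ is integral over the image of $Z$ in $A$, which sits inside $K$; lying-over then forces $Z \cap \ker\psi$ to be a maximal ideal $\mathfrak{m}$ of $Z$, whence $K$ is integral over $Z/\mathfrak{m} \cong \C$ and so $K = \C$ and $D = \C$. Therefore $A \cong \text{Mat}_n(\C)$, $V \cong \C^n$, and $\dim_\C V = n \leq \text{PI-deg}(S)$.

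It remains to identify $\text{PI-deg}(S)$ with $|\sigma_{a,b,c}|$, which is the main obstacle since it is specific to Sklyanin algebras rather than a formal consequence of general PI theory. I would either cite the known computation of this PI-degree (e.g.\ as recorded in \cite{Walton}, whose Theorem 1.3 already encompasses this proposition) or derive it intrinsically from the geometric data: produce a generic irrep of dimension exactly $|\sigma_{a,b,c}|$ using the point-scheme/line-module machinery of \cite{ATV1}, and combine this with the Artin--Procesi theorem to pin down $\text{PI-deg}(S) = |\sigma_{a,b,c}|$. Everything else in the argument is a straightforward consequence of $S$ being an affine PI algebra that is module-finite over an affine center.
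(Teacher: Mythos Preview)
Your proposal is correct and follows essentially the same approach as the paper: module-finite over center (by Artin--Tate--van den Bergh, cited in the paper as \cite[Theorem~7.1]{ATV2}), hence PI, hence irreps are finite-dimensional of dimension bounded by the PI degree, which is identified with $|\sigma_{a,b,c}|$ via \cite{Walton} (the paper cites Proposition~1.6 there specifically). The only difference is cosmetic: where the paper cites textbook results (\cite[Theorem~13.10.3(a)]{MR} and \cite[Proposition~3.1]{BG}) for the finite-dimensionality and the bound, you unpack these into the underlying Kaplansky/Nullstellensatz argument.
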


\begin{proof}
In this case, we have that the Sklyanin algebra $S(a,b,c)$ is module-finite over its center by \cite[Theorem~7.1]{ATV2}. Further, $S(a,b,c)$ has PI degree~$|\sigma_{a,b,c}|$ by \cite[Proposition~1.6]{Walton}. Hence, the irreducible representations of $S(a,b,c)$ are all finite-dimensional by \cite[Theorem~13.10.3(a)]{MR}, of dimension at most~$|\sigma_{a,b,c}|$ by \cite[Proposition~3.1]{BG}.  
\end{proof}

Now we analyze  parameters $(a,b,c) \in \C^3 $ so that the automorphism $\sigma_{a,b,c}$ from \eqref{def:sigma} has finite order. Recall that two projective points $[m_1:m_2:m_3]$, $[n_1:n_2:n_3] \in \mathbb{P}^2_{\mathbb{C}}$ are equal if and only if $m_1n_2-m_2n_1 = m_1n_3-m_3n_1 = m_2n_3-m_3n_2 = 0$ if and only if $n_i = \lambda m_i$ for all $i = 1,2,3$, for some nonzero $\lambda \in \mathbb{C}$.
Omitting the conditions on parameters $a,b,c$ for now, it is worth noting the following the result.

\begin{lemma} \label{order1} The automorphism $\sigma_{a,b,c}$ from \eqref{def:sigma} has order 1 if and only if
$a=1, b=-1, c=0$. In this case,  
$S(1,-1,0)$ is the commutative polynomial ring $\C[x,y,z]$.
\end{lemma}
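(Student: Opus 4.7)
My plan is to prove each direction of the biconditional and then verify the ring-theoretic assertion. Each piece is a direct substitution or a short algebraic argument.

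For the ``if'' direction, I substitute $(a,b,c) = (1,-1,0)$ directly into~\eqref{def:sigma} and obtain $\sigma_{1,-1,0}([u:v:w]) = [-uw:-vw:-w^2] = [u:v:w]$ whenever $w \neq 0$. Hence the formula agrees with the identity on its (dense) domain of definition, so $|\sigma_{1,-1,0}| = 1$. For the claim about the algebra, substituting the same values into~\eqref{eq:Srelations} collapses the three relations to $yz - zy = zx - xz = xy - yx = 0$, exhibiting $S(1,-1,0)$ as the commutative polynomial ring $\mathbb{C}[x,y,z]$.

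For the ``only if'' direction, the cleanest proof invokes the interpretation stated just after~\eqref{def:sigma}: $\sigma_{a,b,c}$ is translation by the point $[a:b:c]$ on $E_{a,b,c}$ with origin $[1:-1:0]$. A translation on an abelian group equals the identity precisely when the translating element is the identity, so $\sigma_{a,b,c} = \id_E$ forces $[a:b:c] = [1:-1:0]$ as projective points, which gives a nonzero $\lambda \in \mathbb{C}$ with $(a,b,c) = \lambda(1,-1,0)$; rescaling yields $(1,-1,0)$. A route that avoids invoking the group-law interpretation is to demand directly that the three $2 \times 2$ minors $u\sigma_2 - v\sigma_1$, $u\sigma_3 - w\sigma_1$, $v\sigma_3 - w\sigma_2$ vanish on $E_{a,b,c}$: comparing the coefficient of $u^2 v$ in the second minor (which is $-c^2$, while~\eqref{eq:E} contains no $u^2 v$ term) forces $c = 0$, after which a symmetric analysis of the coefficients of $uw^2$ in the second and $vw^2$ in the third minor gives $a(a+b) = b(a+b) = 0$, i.e., $a + b = 0$.

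The only mild subtlety, and not really an obstacle, is that at the parameters $(1,-1,0)$ the cubic~\eqref{eq:E} degenerates to the zero polynomial, so $E_{1,-1,0}$ is formally all of $\mathbb{P}^2$ and the formula for $\sigma$ has indeterminacy along $\{w = 0\}$; throughout I interpret ``$\sigma = \id$'' in the generic sense, i.e., equality on the dense open set where the formula is defined.
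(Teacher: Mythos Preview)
Your proof is correct, and your second (``direct'') route for the forward direction is essentially the paper's argument: the paper also writes out one of the $2\times2$ minors, reads off a coefficient to force $c=0$, and then compares entries (after normalizing $a=1$) to obtain $b=-1$. The only cosmetic difference is which monomials you pick (the paper uses the coefficient of $uv^2$ in the minor $v\sigma_3 - w\sigma_2$, then $-vw^2=bvw^2$), and your labels $a(a+b)$, $b(a+b)$ are swapped relative to the minors you name, though the conclusion $a+b=0$ is unaffected.

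Your group-law route is a genuinely different and more conceptual alternative not in the paper. One caveat worth making explicit: the translation interpretation of $\sigma_{a,b,c}$ is stated in the paper for Sklyanin parameters, where $E$ is an honest elliptic curve; but $[1:-1:0]$ violates $abc\neq 0$. So the cleanest way to phrase your argument is as a contradiction---if the Sklyanin conditions held, translation by $[a:b:c]$ being the identity would force $[a:b:c]=[1:-1:0]$, which is excluded---hence the conditions fail, and the degenerate cases must be checked directly (which your coefficient computation then handles). The paper sidesteps this entirely by treating $\sigma=\id$ as an identity of rational maps on $\mathbb{P}^2$, so that the minors vanish identically rather than merely on $E$; this is the simpler reading given the paper's remark ``omitting the conditions on $a,b,c$ for now,'' and it makes your careful modulo-$E$ reasoning unnecessary.
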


\begin{proof}
If $\sigma$ has order 1, then we obtain that $[acv^2-b^2uw~:~bcu^2-a^2vw~:~abw^2-c^2uv] = [u:v:w]$. Therefore, $bcu^2w - (a^2 +ab) vw^2 +c^2 uv^2 =0$, which (by taking the coefficient of $uv^2$) implies that $c=0$. Without loss of generality, take $a=1$. Now, $[-b^2uw:-vw:bw^2] = [u:v:w]$, and we must have that $b=-1$ since $-vw^2 = bvw^2$. Therefore, the forward direction holds. For the converse, note that
$\sigma_{1,-1,0}([u:v:w]) = [-uw:-vw:-w^2]  = [u:v:w]$, so $\sigma_{1,-1,0}$ has order 1. The last statement is clear.
\end{proof}

Consider the following  preliminary results about Sklyanin algebras associated to a point of order 2.

\begin{lemma} \label{lem:order2}
Take $S=S(a,b,c)$ to be a 3-dimensional Sklyanin algebra associated to the automorphism $\sigma_{a,b,c}$ of \eqref{def:sigma}.  Then, $|\sigma_{a,b,c}| = 2$ if and only if $a=b$. \end{lemma}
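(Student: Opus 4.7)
The plan is to verify both directions by direct computation of $\sigma_{a,b,c}^{\,2}([u:v:w])$, comparing it to $[u:v:w]$ modulo the equation \eqref{eq:E} defining $E_{a,b,c}$. Writing $[U:V:W] = \sigma([u:v:w])$ with $U = acv^2 - b^2 uw$, $V = bcu^2 - a^2 vw$, $W = abw^2 - c^2 uv$, set $[U':V':W'] = \sigma([U:V:W])$. Using the $2\times 2$-minor criterion for projective equality recalled just before Lemma~\ref{order1}, the equation $\sigma^2 = \id_E$ is equivalent to the three degree-$5$ forms
$$vU' - uV', \qquad wU' - uW', \qquad wV' - vW'$$
lying in the principal ideal of $\mathbb{C}[u,v,w]$ generated by the cubic in \eqref{eq:E}.

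For the backward direction (\emph{sufficiency}), I would substitute $a=b$ into the formulas above. Then $U$ and $V$ are interchanged under $u \leftrightarrow v$, while $W$ is invariant, introducing a useful symmetry. Expanding $\sigma^2$ and collecting terms, each of the three degree-$5$ forms above should either vanish identically or reduce to an explicit polynomial multiple of $(2a^3+c^3)uvw - a^2 c(u^3+v^3+w^3)$, which is the defining cubic of $E_{a,a,c}$. Since Definition~\ref{def:Sklyanin} imposes $abc \neq 0$, Lemma~\ref{order1} forbids $|\sigma|=1$, so $|\sigma| = 2$ exactly.

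For the forward direction (\emph{necessity}), assuming $\sigma^2 = \id_E$, I would match coefficients in an identity of the form $vU' - uV' = \ell \cdot f$, where $f$ is the cubic in \eqref{eq:E} and $\ell \in \mathbb{C}[u,v,w]$ is a form of degree $2$ whose six coefficients are polynomials in $a,b,c$. Comparing coefficients of the $21$ monomials of degree $5$ in $u,v,w$ yields an overdetermined system of polynomial relations on $a,b,c$. I would expect a factor of $(a-b)$ (or perhaps $a^3-b^3$) to appear explicitly in this system, with the residual constraints (after removing that factor) falling within the excluded Sklyanin locus $abc = 0$ or $(3abc)^3 = (a^3+b^3+c^3)^3$, hence forcing $a=b$.

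The main obstacle is bookkeeping rather than conceptual: $U', V', W'$ are degree-$4$ forms in $u,v,w$ whose coefficients have combined degree $6$ in $a,b,c$, so a by-hand expansion is unpleasant. In practice, I would delegate the ideal-membership and factorization checks to a computer algebra system, consistent with the paper's overall methodology, and verify that the ideal generated by $vU'-uV'$, $wU'-uW'$, $wV'-vW'$ modulo $(f)$, after saturation by the Sklyanin nondegeneracy locus, cuts out exactly $a = b$ in the $(a,b,c)$-parameter space.
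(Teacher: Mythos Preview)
Your plan matches the paper's approach: compute $\sigma^2$ from the explicit formulas, test projective equality with $[u:v:w]$ via the three $2\times 2$ minors, and delegate the symbolic algebra to a machine. The only real difference is that you work modulo the cubic $f$ of $E$ (asking that each degree-$5$ minor lie in the ideal $(f)$), whereas the paper simply demands that the three minors vanish \emph{identically} in $u,v,w$: after normalizing $a=1$, it extracts every monomial coefficient of the three minors and solves the resulting system for $(b,c)$, obtaining $b=1$. The paper's shortcut is cleaner---no auxiliary quadratic $\ell$ to carry around---and it happens to work here because, as the paper's explicit converse check shows, when $a=b$ one has $\sigma^2([u:v:w]) = [uP:vP:wP]$ for a common factor $P$, so the minors are already identically zero without any reduction modulo $f$. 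Your ideal-membership formulation is the logically tighter one for the forward implication (a priori $\sigma^2=\id_E$ only forces vanishing on $E$, not on all of $\mathbb{P}^2$), but in this instance the extra care turns out to be unnecessary, and the saturation step you describe would simply recover the same locus $a=b$.
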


\begin{proof}
Without loss of generality, take $a=1$. Consider the following routine  with comments.

{\footnotesize \begin{multicols}{2}
\noindent Let $\sigma_{1,b,c}^\ell([u:v:w]) = [f_\ell:g_\ell:h_\ell]$, for $\ell =1,2$.
\begin{verbatim}
f1:=c*v^2-b^2*u*w:        g1:=b*c*u^2-v*w:
h1:=b*w^2-c^2*u*v:
f2:=c*g1^2-b^2*f1*h1:     g2:=b*c*f1^2-g1*h1:  
h2:=b*h1^2-c^2*f1*g1:
\end{verbatim}
\noindent We want $\sigma_{1,b,c}^2 = \id$, or equivalently, we need that $[f_2:g_2:h_2]$ =$ [u:v:w]$. Hence, we want the expressions $v_1, v_2, v_3$ below to be simultaneously zero for some $b$ and $c$.
\begin{verbatim}
v1:=u*g2-f2*v:    v2:=u*h2-f2*w:       v3:=v*h2-g2*w:
\end{verbatim}

\noindent By Definition~\ref{def:Sklyanin}, we  exclude $b=c=0$. Now we extract the coefficients of $v_1, v_2, v_3$ and solve for $b, c$. 
\begin{verbatim}
var:=[u,v,w];
Coeffs:=[coeffs(collect(v1,var,'distributed'),var), 
         coeffs(collect(v2,var,'distributed'),var), 
         coeffs(collect(v3,var,'distributed'),var)];
solve([op(Coeffs),b<>0,c<>0],{b,c});
>     {b = 1, c = c}
\end{verbatim}
\end{multicols}}

\noindent Hence, $b=1$ and there are no conditions on $c$ (other than those in Definition~\ref{def:Sklyanin}). 

The converse is clear by the computation above, but we can verify this directly. If $a=b=1$, then $\sigma_{1,1,c}([u:v:w]) = [cv^2-uw : cu^2-vw : w^2-c^2uv]$. So, 

{\footnotesize \[
\begin{array}{l}
\sigma^2_{1,1,c}([u:v:w]) \\
= [c(cu^2-vw)^2-(cv^2-uw)(w^2-c^2uv) : c(cv^2-uw)^2-(cu^2-vw)(w^2-c^2uv) : (w^2-c^2uv)^2-c^2(cv^2-uw)(cu^2-vw)]\\
= [u(c^3u^3+c^3v^3+w^3-3c^2uvw) :  v(c^3u^3+c^3v^3+w^3-3c^2uvw) : w(c^3u^3+c^3v^3+w^3-3c^2uvw)]\\
=[u:v:w],
\end{array}
\]}

\noindent as desired.
\end{proof}

Hence, to work with Sklyanin algebras $S(a,b,c)$ associated to a point of order 2, we take $a=b=1$.

\begin{lemma} \label{lem:irrep2} We have the following statements for the Sklyanin algebra $S(1,1,c)$.
\begin{enumerate}
\item The only $1$-dimensional representation of $S(1,1,c)$ is the trivial representation.
\item All irreducible representations of $S(1,1,c)$ are finite-dimensional,  of at most dimension equal to 2.
\end{enumerate}
\end{lemma}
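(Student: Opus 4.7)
The plan is to dispatch part (b) first since it is essentially immediate, and then handle part (a) by a short direct calculation on scalar representations.

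For part (b), I would invoke Lemma~\ref{lem:order2}, which tells us that $S(1,1,c)$ is associated to a point of order~2, i.e., $|\sigma_{1,1,c}| = 2$. Plugging this into Proposition~\ref{prop:irrep} immediately yields that every irreducible representation of $S(1,1,c)$ is finite-dimensional of dimension at most $|\sigma_{1,1,c}| = 2$. No further work is needed.

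For part (a), a $1$-dimensional representation $\psi \colon S(1,1,c) \to \End(\mathbb{C}) \cong \mathbb{C}$ is determined by three scalars $X := \psi(x)$, $Y := \psi(y)$, $Z := \psi(z) \in \mathbb{C}$. Specializing the defining relations \eqref{eq:Srelations} with $a=b=1$ gives the three scalar equations
\[
2YZ + cX^2 = 0, \qquad 2ZX + cY^2 = 0, \qquad 2XY + cZ^2 = 0.
\]
The key step is to multiply these three equations together: the left sides give $8\,X^2Y^2Z^2$ (up to sign) and the right sides give $-c^3 X^2 Y^2 Z^2$, which after rearranging yields $(c^3+8)\, X^2 Y^2 Z^2 = 0$. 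By Definition~\ref{def:Sklyanin} we have $c^3 \neq -8$ (this is a consequence of the condition $(3abc)^3 \neq (a^3+b^3+c^3)^3$ with $a=b=1$), so $XYZ = 0$. Then a case analysis on which of $X, Y, Z$ vanishes — say $X = 0$, so that the second relation gives $cY^2 = 0$ and then the third gives $cZ^2 = 0$, forcing $Y = Z = 0$ since $c \neq 0$ — shows $\psi$ is the trivial representation. The other two cases are symmetric.

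Neither part poses a real obstacle: part (b) is a direct citation of the machinery already set up, and the main subtlety in part (a) is recognizing that the product of the three scalar relations collapses to a single polynomial identity in $c$ that is excluded by the Sklyanin parameter conditions. If anything, the only thing to be careful about is tracking exactly which exclusions on $c$ from Definition~\ref{def:Sklyanin} are being used, and confirming that $c^3 \neq -8$ is indeed implied (with $a=b=1$, the constraint $(3c)^3 \neq (2+c^3)^3$ simplifies so that $c^3 = -8$ is among the excluded values).
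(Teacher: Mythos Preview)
Your proof is correct and matches the paper's approach: part (b) is identical (cite Lemma~\ref{lem:order2} and Proposition~\ref{prop:irrep}), and for part (a) the paper simply says one can ``compute this directly'' and then offers a one-line Maple verification in lieu of the hand calculation. Your product-of-the-three-relations trick, yielding $(c^3+8)X^2Y^2Z^2=0$ and then a trivial case analysis, is exactly the kind of direct computation the paper gestures at but does not write out; it is a cleaner, self-contained argument than deferring to the computer, and your check that $c^3\neq -8$ falls out of the parameter constraints with $a=b=1$ is correct.
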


\begin{proof}
(a) One can compute this directly, or by using the short routine below:

{\footnotesize
\begin{verbatim}
solve([x*y+y*x+c*z^2,y*z+z*y+c*x^2, z*x+x*z+c*y^2], [x,y,z]);
>     [[x = 0, y = 0, z = 0]]
\end{verbatim}
}

(b) This follows from Proposition~\ref{prop:irrep} and Lemma~\ref{lem:order2}. 
\end{proof}


\section{Methodology and Terminology} \label{sec:methodology}

In this section, we provide an outline of the algorithm used to prove Theorem~\ref{thm:intro}; see Sections~\ref{sec:allreps}-\ref{sec:2Jordanblock} for the full details. The goal is to obtain {\it irreducible representative families} of $S(1,1,c)$ as defined below.

\begin{definition}
 We say that a set of matrix solutions of the defining equations of $S(1,1,c)$ (or of equations~\eqref{eq:Srelations} with $a=b=1$) is a  {\it representative family of matrix solutions}, if no two members within the set are equivalent. Further, we call this set an {\it irreducible representative family } if all of its members are irreducible matrix solutions of $S(1,1,c)$.
\end{definition}

\begin{center}
Note that we aim to have the parameter $c$ of $S(1,1,c)$ free.\\ So due to Maple's default alpha ordering, {\bf we refer to $c$ as {\tt zc} in the code below}.
\end{center}
\medskip

First, we make the following simplification.   
\smallskip

\noindent {\bf Step 0: Assume that the matrix $X$ is in Jordan form.} Due to Lemma~\ref{lem:irrep2} we know that all non-trivial irreps of $S(1,1,c)$ are of dimension $2$.  Hence, we only study $2\times 2$ matrix solutions $(X,Y,Z)$ of \eqref{eq:Srelations} with $(a,b,c)=(1,1,c)$. Initially, the entries of $X, Y, Z$ are $x_\ell, y_\ell, z_\ell$ for $\ell = 1,2,3,4$. We further simplify the problem by assuming that $X$ is in Jordan form.  This simplification is made because we wish to classify the irreps up to equivalence, and equivalence is determined by simultaneous conjugation by an invertible matrix.  So, we take $X$ to be either a single $2\times 2$ Jordan block or diagonal so that we have 3 or 2 less unknowns, resp.  We consider these cases separately.  
\medskip

\noindent {\bf Step 1: Find all families of matrix solutions.} Now, we solve \eqref{eq:Srelations} with $(a,b,c)=(1,1,c)$ for $2\times 2$ matrices $(X,Y,Z)$. The output consists of 2-dimensional {\it (matrix solution) families} of $S(1,1,c)$.   The solutions are grouped according to the default behavior of Maple.  We refer to these groups as {\tt Families}.
\medskip

\noindent {\bf Step 2: Eliminate reducible matrix solutions.} We run this step now to cut down on the run-time of the algorithm and the complexity of its output. Given a family of matrix solutions, we use Corollary~\ref{cor:irred} to determine if all members of this family are reducible. Namely, we let {\tt w =<<p,q>>} be a basis of a 1-dimensional subspace $W$ of $\mathbb{C}^2$. Note that if $p=p_1 + p_2 i$ and $q = q_1 + q_2 i$, for $i := \sqrt{-1}$ and  $p_1, p_2, q_1, q_2 \in \mathbb{R}$, then  $(p,q) \neq (0,0)$ precisely when $p \bar{p} + q \bar{q} \neq 0$. We examine when $W$ is stable under the action of $S(1,1,c)$; namely, we need $Xw, Yw, Zw$ to be a scalar multiple of $w$. So, we solve for $p$, $q$ subject to the following conditions: 
\begin{itemize}
\item $W$ is not the zero subspace \dotfill {\tt p*conjugate(p)+q*conjugate(q)<>0}
\item $XW \subset W$ \dotfill {\tt p*Xw[2][1]-q*Xw[1][1]} = 0
\item $YW \subset W$ \dotfill {\tt p*Yw[2][1]-q*Yw[1][1]} = 0
\item $ZW \subset W$ \dotfill {\tt p*Zw[2][1]-q*Zw[1][1]} = 0
\item conditions on $c$.
\end{itemize}
If there is a solution, then this implies that all members of the specified family are reducible. We remove such families from further computations by forming a list {\tt NonRedFams} consisting of families for which there is no $p,q$ satisfying the conditions above.

\medskip

\begin{center} 
{\it Steps 3a and 3b are independent of each other, and either can be run after Step 2.}
\end{center}

\medskip

\noindent {\bf  Step 3a: Account for equivalence between families.} 
For the remaining families of matrix solutions, we determine conditions when members of one family {\tt NonRedFams[i]} is equivalent to members of another family {\tt NonRedFams[j]}. These conditions are collected in the list {\tt BetweenFams}. 

We do so as follows. First, we force variables of {\tt NonRedFams[i]} to be in terms of $u_\ell, v_\ell, w_\ell$ instead of $x_\ell, y_\ell, z_\ell$, for $\ell=1,2,3,4$; this is executed with {\tt eval(NonRedFams[...],ChangeVars)}. Next, we conjugate the relabeled matrices simultaneously by a $2 \times 2$ matrix {\tt Q} to form {\tt Xconj}, {\tt Yconj}, {\tt Zconj}. Then, we solve for variables $u_\ell, v_\ell, w_\ell, x_\ell, y_\ell, z_\ell$ subject to the following conditions:
\begin{itemize}
\item {\tt Xconj} is equal to the $X$-matrix {\tt Xj} of  {\tt NonRedFams[j]} \dotfill {\tt Equiv1} = 0
\item {\tt Yconj} is equal to the $Y$-matrix {\tt Yj} of  {\tt NonRedFams[j]}\dotfill {\tt Equiv2} = 0
\item {\tt Zconj} is equal to the $Z$-matrix {\tt Zj} of  {\tt NonRedFams[j]}\dotfill {\tt Equiv3} = 0
\item conditions on $c$ and invertibility of {\tt Q}.
\end{itemize}
The output is {\tt [i, j, \{}conditions on $u_\ell, v_\ell, w_\ell, x_\ell, y_\ell, z_\ell${\tt \} ]},  which we interpret as follows.
\medskip

\noindent {\bf Interpretation:}  We can eliminate {\tt NonRedFams[i]} from our consideration if all of its members are equivalent to members of {\tt NonRedFams[j]} for some $j \neq  i$. This occurs if we get an output 
\begin{center}
\begin{tabular}{ll}
{\tt [i, j,...\{ }{each of $u_\ell, v_\ell, w_\ell$ is free }{\tt \}...]} &for $i <j$, or\\
{\tt [j, i,...\{ }{each of $x_\ell, y_\ell, z_\ell$ is free }{\tt \}...]} &for $j < i$.
\end{tabular}
\end{center}

\noindent We obtain that {\tt NonRedFams[i]} forms a representative family if we get  output
\begin{center}
{\tt [i, i,...\{ }{restrictions on  $u_\ell, v_\ell, w_\ell, x_\ell, y_\ell, z_\ell$ }{\tt \}...]}
\end{center}
under one of the following conditions:

\smallskip

$\bullet$  (i) each of $x_\ell$, $y_\ell$, $z_\ell$ is free and \quad \quad  (ii) each of $u_\ell$, $v_\ell$, $w_\ell$ is free, or depends only on $x_\ell$, $y_\ell$, $z_\ell$; or 

$\bullet$ (i) each of $u_\ell$, $v_\ell$, $w_\ell$ is free and \hspace{.16in} (ii) each of $x_\ell$, $y_\ell$, $z_\ell$ is free, or depends only on $u_\ell$, $v_\ell$, $w_\ell$.

\smallskip

\noindent  In either case above, we  set the free variables in (ii) equal to 1 to obtain representative families.
Otherwise, a careful examination is needed. 

Conditions $u_\ell, v_\ell, w_\ell, x_\ell, y_\ell, z_\ell$ may depend on entries of the matrix {\tt Q}. In this case, we can conclude that such variables are free as long as this does not violate invertibility of {\tt Q}.

\medskip

\noindent {\bf Step 3b: Check for full irreducibility conditions.} Here, we run the same code as in Step 2 
except that we solve for $p,q$ along with all variables $x_\ell, y_\ell, z_\ell$. The conditions are collected in a list called {\tt IrConditions}.
If the output for {\tt NonRedFams[i]} is {\tt [i]} (or empty), then all members of {\tt NonRedFams[i]} are irreducible.


\section{Families of non-reducible representations of $S(1,1,c)$} \label{sec:allreps}

Here, we execute Steps 0-2 of the algorithm discussed in the previous section. Namely, we  find all 2-dimensional representations of $S(1,1,c)$  by determining $2\times 2$ matrix  solutions $(X,Y,Z)$ to \eqref{eq:Srelations} with $a=b=1$ . Here, $X$ is in Jordan form, either one Jordan block or two Jordan blocks (diagonal). Moreover, we eliminate the families of solutions for which all of its members are reducible.

\medskip
\noindent \framebox{{\bf Steps 0 and 1}} \quad We set up the defining equations. 
  \medskip
  
{\footnotesize
\begin{verbatim}
restart;                            with(LinearAlgebra):
\end{verbatim}
}
 \medskip

\noindent For 1 Jordan block, uncomment {\tt \#}. For 2 Jordan blocks, uncomment {\tt \#\#}.
 \medskip
 
{\footnotesize
\begin{verbatim}
#  X:= <<x1, 0|1, x1>>:  Y:= <<y1, y3|y2, y4>>:  Z:= <<z1, z3|z2, z4>>:
## X:= <<x1, 0|0, x4>>:  Y:= <<y1, y3|y2, y4>>:  Z:= <<z1, z3|z2, z4>>:
\end{verbatim}
}
 \medskip

\noindent Continue by entering the following. Again, we refer to $c$ by $zc$ in the code below. 
 \medskip
 
{\footnotesize
\begin{verbatim}
XX:= Multiply(X,X):                 XY:= Multiply(X,Y):                 XZ:= Multiply(X,Z):   
YX:= Multiply(Y,X):                 YY:= Multiply(Y,Y):                 YZ:= Multiply(Y,Z):
ZX:= Multiply(Z,X):                 ZY:= Multiply(Z,Y):                 ZZ:= Multiply(Z,Z):
Eq1:= convert(YZ+ZY+zc*XX,list):    Eq2:= convert(XZ+ZX+zc*YY,list):    Eq3:= convert(XY+YX+zc*ZZ,list):
\end{verbatim}
}
 \medskip

\noindent We enter conditions on $c$ and solve for $Eq1$, $Eq2$, $Eq3$, subject to these conditions, to get all 2-dimensional representations of $S(1,1,c)$. For 1 Jordan block, uncomment {\tt \#}. For 2 Jordan blocks, uncomment {\tt \#\#}.

 \medskip
 
{\footnotesize
\begin{verbatim}
Conditions:= [zc<>0,zc^3<>-8,zc^3<>1]:
# Vars:= {zc,x1,y1,y2,y3,y4,z1,z2,z3,z4}:
## Vars:= {zc,x1,x4,y1,y2,y3,y4,z1,z2,z3,z4}:
M:= solve([op(Eq1),op(Eq2),op(Eq3),op(Conditions)],Vars):
\end{verbatim}
}

 \medskip

\noindent We need to work with all values of roots in expressions for $M$, call it $L$.

 \medskip
 
 {\footnotesize
\begin{verbatim}
L:= []:
for i from 1 to nops([M]) do        T:=map(allvalues,{M[i]}):
for j from 1 to nops(T) do          L:=[op(L),T[j]]:
end do:       end do:
\end{verbatim}
}

\medskip

\noindent We build solution families from the list $L$.
\medskip

{\footnotesize
\begin{verbatim}
Families:=[]:
for i from 1 to nops(L) do          Families:=[op(Families),[eval(X,L[i]),eval(Y,L[i]),eval(Z,L[i])]]:
end do:
\end{verbatim}
}
\medskip

\vspace{.2in}
\noindent \framebox{{\bf Step 2}} \quad We now remove families whose members are all reducible.

\medskip

{\footnotesize
\begin{verbatim}
w:=<<p,q>>:                         NonRedFams:=[]:
for i from 1 to nops(Families) do
Xw:=Multiply(Families[i][1],w):     Yw:=Multiply(Families[i][2],w):     Zw:=Multiply(Families[i][3],w):
NonRed:=solve([p*conjugate(p)+q*conjugate(q)<>0, 
               p*Xw[2][1]-q*Xw[1][1], p*Yw[2][1]-q*Yw[1][1], p*Zw[2][1]-q*Zw[1][1], op(Conditions)],[p,q]):
	if NonRed=[] then                   NonRedFams:=[op(NonRedFams),[Families[i]]]:  
	end if:  end do:
\end{verbatim}
}
\medskip

\noindent The output of Steps 0-2 can be viewed by entering the following:
\medskip

{\footnotesize
\begin{verbatim}
for i from 1 to nops(NonRedFams) do    print(NonRedFams[i]):     end do:
\end{verbatim}
}

\medskip

\noindent Now, we obtain the  results below.
\smallskip

\begin{center}
{\footnotesize
\[
\framebox{\text{$
\begin{array}{llllll}
(1) 
& X=\begin{pmatrix}
0 & 1 \\
0 & 0 \end{pmatrix}
&&
Y= \begin{pmatrix}
-y_4 & \frac{y_4^2+(y_4^4-8y_4z_4^3)^{1/2}}{2cz_4^2} \\
-cz_4^2 & y_4 \end{pmatrix}
&&
Z = \begin{pmatrix}
-z_4 & 0 \\
\frac{c(y_4^2+(y_4^4-8y_4z_4^3)^{1/2})}{2}-cy_4^2 & z_4 \end{pmatrix}\\\\
(2) 
& X=\begin{pmatrix}
0 & 1 \\
0 & 0 \end{pmatrix} 
&&
Y= \begin{pmatrix}
-y_4 & -\frac{-y_4^2+(y_4^4-8y_4z_4^3)^{1/2}}{2cz_4^2} \\
-cz_4^2 & y_4 \end{pmatrix}
&&
Z = \begin{pmatrix}
-z_4 & 0 \\
-\frac{c(-y_4^2+(y_4^4-8y_4z_4^3)^{1/2})}{2}-cy_4^2 & z_4 \end{pmatrix}\\\\
(3) &
X = \begin{pmatrix}
0 & 1 \\
0 & 0 \end{pmatrix} 
&& 
Y= \begin{pmatrix}
\frac{\alpha}{cz_3} & -\frac{-cz_2^2z_3-cz_2z_4^2+\frac{2\alpha z_4}{cz_3}}{z_3} \\
-cz_2z_3-cz_4^2 & -\frac{\alpha}{cz_3} \end{pmatrix}
&& 
Z = \begin{pmatrix}
-z_4 & z_2 \\
z_3 & z_4 \end{pmatrix}\\\\
(4) &
X = \begin{pmatrix}
0 & 1 \\
0 & 0 \end{pmatrix} 
&& 
Y= \begin{pmatrix}
-\frac{\beta}{cz_3} & -\frac{-cz_2^2z_3-cz_2z_4^2+\frac{2\beta z_4}{cz_3}}{z_3} \\
-cz_2z_3-cz_4^2 & \frac{\beta}{cz_3} \end{pmatrix}
&& 
Z = \begin{pmatrix}
-z_4 & z_2 \\
z_3 & z_4 \end{pmatrix}\\\\
(5) &
X = \begin{pmatrix}
0 & 1\\
0 & 0 \end{pmatrix} 
&&
Y = \begin{pmatrix}
-y_4 & \frac{y_4^2}{cz_4^2}\\
-cz_4^2 & y_4 \end{pmatrix}
&&
Z = \begin{pmatrix}
-z_4 & \frac{2y_4}{cz_4}\\
0 & z_4 \end{pmatrix}.\\
\end{array}
$}}
\]
}
\end{center}
\begin{center}
{\small 
\[
\framebox{\text{$
\begin{array}{c}
\alpha = c^2z_2z_3z_4+c^2z_4^3+(3c^4z_2^2z_3^2z_4^2+3c^4z_2z_3z_4^4+c^4z_4^6-cz_3^3+c^4z_3^3z_2^3)^{1/2}\\
\beta = -c^2z_2z_3z_4-c^2z_4^3+(3c^4z_2^2z_3^2z_4^2+3c^4z_2z_3z_4^4+c^4z_4^6-cz_3^3+c^4z_3^3z_2^3)^{1/2}
\end{array}
$}}
\]
}
\end{center}
\smallskip
\begin{center}
{\small {\sc Table} 1. Output of Steps 0-2: {\tt NonRedFams} for $X$ is one Jordan block case}
\end{center}

\vspace{.1in}

\begin{center}
{\footnotesize
\[
\framebox{\text{$
\begin{array}{llllll}
(1) 
& X= \begin{pmatrix}
\frac{cz_4^2}{2y_4} & 0\\
0 & -\frac{cz_4^2}{2y_4} \end{pmatrix}
&&
Y= \begin{pmatrix}
-y_4 & -\frac{y_4^3-z_4^3}{y_4y_3}\\
y_3 & y_4 \end{pmatrix}
&&
Z = \begin{pmatrix}
-z_4 & -\frac{z_4(8y_4^3+c^3z_4^3)}{4y_4^2y_3}\\
0 & z_4 \end{pmatrix}\\\\

(2) 
& X= \begin{pmatrix}
-x_4 & 0\\
0 & x_4 \end{pmatrix}
&&
Y= \begin{pmatrix}
0 & 0\\
y_3 & 0 \end{pmatrix}
&&
Z = \begin{pmatrix}
0 & -\frac{cx_4^2}{y_3}\\
0 & 0 \end{pmatrix}\\\\

(3) 
& X= \begin{pmatrix}
\frac{cy_4^2}{2z_4} & 0\\
0 & -\frac{cy_4^2}{2z_4} \end{pmatrix}
&&
Y= \begin{pmatrix}
-y_4 & -\frac{y_4(8z_4^3+c^3y_4^3)}{4z_3z_4^2}\\
0 & y_4 \end{pmatrix}
&&
Z = \begin{pmatrix}
-z_4 & \frac{y_4^3-z_4^3}{z_4z_3}\\
z_3 & z_4 \end{pmatrix}\\\\

(4) 
& X= \begin{pmatrix}
-x_4 & 0\\
0 & x_4 \end{pmatrix}
&&
Y= \begin{pmatrix}
0 & -\frac{cx_4^2}{z_3}\\
0 & 0 \end{pmatrix}
&&
Z = \begin{pmatrix}
0 & 0\\
z_3 & 0 \end{pmatrix}\\\\

(5) 
& X= \begin{pmatrix}
\frac{\gamma}{c^2y_3z_3} & 0\\
0 & -\frac{\gamma}{c^2y_3z_3} \end{pmatrix}
&&
Y= \begin{pmatrix}
-y_4 & -\frac{\frac{2z_4\gamma}{c^2y_3z_3}+cy_4^2}{cy_3}\\
y_3 & y_4 \end{pmatrix}
&&
Z = \begin{pmatrix}
-z_4 & -\frac{-\frac{2\gamma y_4}{c^2y_3z_3}+cz_4^2}{cz_3}\\
z_3 & z_4 \end{pmatrix}\\\\

(6) 
& X= \begin{pmatrix}
-\frac{\delta}{c^2y_3z_3} & 0\\
0 & \frac{\delta}{c^2y_3z_3} \end{pmatrix}
&&
Y= \begin{pmatrix}
-y_4 & -\frac{\frac{2z_4\delta}{c^2y_3z_3}+cy_4^2}{cy_3}\\
y_3 & y_4 \end{pmatrix}
&&
Z = \begin{pmatrix}
-z_4 & -\frac{\frac{2\delta y_4}{c^2y_3z_3}+cz_4^2}{cz_3}\\
z_3 & z_4 \end{pmatrix}
\end{array}
$}}
\]
}
\end{center}

\begin{center}
{\small \[
\framebox{\text{$
\begin{array}{c}
\gamma = -z_3^2z_4-y_3^2y_4+(z_3^4z_4^2+2z_3^2z_4y_3^2y_4+y_3^4y_4^2+c^3y_3z_3^3y_4^2+c^3y_3^3z_3z_4^2-2c^3y_3^2z_3^2y_4z_4)^{1/2}\\
\delta = z_3^2z_4+y_3^2y_4+(z_3^4z_4^2+2z_3^2z_4y_3^2y_4+y_3^4y_4^2+c^3y_3z_3^3y_4^2+c^3y_3^3z_3z_4^2-2c^3y_3^2z_3^2y_4z_4)^{1/2}
\end{array}
$}}
\]
}
\end{center}
\smallskip

\begin{center}
{\small {\sc Table} 2. Output of Steps 0-2: {\tt NonRedFams} for $X$ is two Jordan block case}
\end{center}

\medskip


\section{Equivalence  and Irreducibility: one Jordan block case} \label{sec:1Jordanblock}

We wish to classify the matrix solutions from Steps 0-2 (in the previous section) up to equivalence and extract the irreducible equivalence classes.  So, we would like to know under what conditions is a matrix solution equivalent to a member of the same/different solution family.  We then specify conditions for which the representative of an equivalence class of  matrix solutions is irreducible. This achieved with Steps 3a and 3b, respectively, as described in Section~\ref{sec:methodology}.
In this section, we continue the algorithm of Section~\ref{sec:allreps} in the case when $X$ is one Jordan block.
\vspace{.2in}

\noindent \framebox{\bf Step 3a} \quad To execute Step 3a, as described in Section~\ref{sec:methodology}, enter the following:

\medskip

{\footnotesize
\begin{verbatim}
BetweenFams:=[]:                    ChangeVars:=[x1=u1,x4=u4,y1=v1,y2=v2,y3=v3,y4=v4,z1=w1,z2=w2,z3=w3,z4=w4]:
Q:=<<q1,q3|q2,q4>>:                 Qi:=MatrixInverse(Q):
for i from 1 to nops(NonRedFams) do
	Xconj:=Multiply(Q,Multiply(eval(NonRedFams[i][1][1],ChangeVars),Qi)):
	Yconj:=Multiply(Q,Multiply(eval(NonRedFams[i][1][2],ChangeVars),Qi)):
	Zconj:=Multiply(Q,Multiply(eval(NonRedFams[i][1][3],ChangeVars),Qi)):
	for j from i to nops(NonRedFams) do
		Xj:=NonRedFams[j][1][1]:            Yj:=NonRedFams[j][1][2]:             Zj:=NonRedFams[j][1][3]:
		Equiv1:= convert(Xj-Xconj,list):    Equiv2:= convert(Yj-Yconj,list):     Equiv3:= convert(Zj-Zconj,list):
		Conditions:= [zc<>0,zc^3<>-8,zc^3<>1,q1*q4-q2*q3<>0]:
		Equiv:= solve([op(Equiv1),op(Equiv2),op(Equiv3),op(Conditions)]):
		BetweenFams:=[op(BetweenFams),[i,j,Equiv]]:
	end do:     end do:
\end{verbatim}
}
\medskip

\noindent The output of Steps 0-3a can be viewed by entering the following:
\medskip

{\footnotesize
\begin{verbatim}
for i from 1 to nops(BetweenFams) do      print(BetweenFams[i]):         end do:
\end{verbatim}
}

\vspace{.2in}

\noindent \framebox{\bf Interpretation} \quad  Consider the snippets of output:

\medskip

{\footnotesize
\begin{Verbatim}[baselinestretch=0.7]
   [1, 2, {q1 = q1, q2 = q2, q3 = 0, q4 = q1,

                                       3     3        1/2
                                  -4 w4  + v4  - v4 %1                           4 q1 w4
           v4 = v4, w4 = w4, y4 = -----------------------, z4 = -w4,  zc = - ----------------}, 
                                         2     1/2                                 2     1/2
                                       v4  - %1                              q2 (v4  - %1   )
             4       3
     %1 := v4  - 8 w4  v4
                                                                            2
                                                                    zc q2 w4
   [1, 5, {q1 = q1, q2 = q2, q3 = 0, q4 = q1, v4 = 0, w4 = w4, y4 = ---------, z4 = w4, zc = zc}]
                                                                       q1
\end{Verbatim}
}

\noindent In the first snippet, one sees that with a choice of $q_1$ and $q_2$, the parameter $c$ can be considered free without violating the invertibility of $Q$.  We can also conclude that any member of {\tt NonRedFams[1]} is equivalent to a member of {\tt NonRedFams[2]}, except when $v_4^2-(v_4^4-8w_4^3v_4)^{1/2}=0$, or  equivalently when $v_4$ or  $w_4=0$.  From the second snippet of output, we see that any member of {\tt NonRedFams[1]} is equivalent to a member of {\tt NonRedFams[5]} when $v_4 = 0$.  Moreover by Table 1, we have that in {\tt NonRedFams[1]} $w_4$ (identified with $z_4$) cannot be $0$.
\medskip

$\bullet$ ~So, we exclude {\tt NonRedFams[1]} from further computation.
\medskip

\noindent Now consider another two snippets of output:
\medskip

{\footnotesize
\begin{Verbatim}[baselinestretch=0.7]
   [2, 4, {q1 = q1, q2 = q2, q3 = 0, q4 = q1, v4 = v4, w4 = w4, z2 = z2,

                          2                                                2
              (2 RootOf(_Z  + 1 + _Z) w4 q2 - q1 z2) q1           RootOf(_Z  + 1 + _Z) w4 q2 - q1 z2
         z3 = -----------------------------------------,    z4 = - ----------------------------------,
                                   2                                               q2
                                 q2
                              2
                2 (2 RootOf(_Z  + 1 + _Z) w4 q2 - q1 z2) q1
         zc = - -------------------------------------------}]
                         2      4       3    1/2    2
                      (v4  + (v4  - 8 w4  v4)   ) q2
                                                                            2
                                                                    zc q2 w4
   [2, 5, {q1 = q1, q2 = q2, q3 = 0, q4 = q1, v4 = 0, w4 = w4, y4 = ---------, z4 = w4,  zc = zc}]
                                                                       q1
\end{Verbatim}
}
\medskip

\noindent Through a choice of $q_1$ and $q_2$, we consider $c$ to be free in {\tt [2,4,...}.  We conclude that any member of {\tt NonRedFams[2]} is equivalent to a member of {\tt NonRedFams[4]} for all values of $v_4$ and $w_4$ except when $v_4^2+(v_4^4-8w_4^3v_4)^{1/2}~=~0$, or equivalently when $v_4$ or $w_4=0$.  From the second snippet of output, we see that if $v_4=0$, any member of {\tt NonRedFams[2]} is equivalent to a member of {\tt NonRedFams[5]}.  From Table 1, we see that $w_4$ (identified with $z_4$) in {\tt NonRedFams[2]} cannot be $0$.
\medskip

$\bullet$~ So, we exclude {\tt NonRedFams[2]} from further computation.
\medskip

\noindent Now take into account the following snippets of output:
\medskip

{\footnotesize
\begin{Verbatim}[baselinestretch=0.7]
    [3, 4, {q1 = q1, q2 = q2, q3 = 0, q4 = q1, w2 = w2, w3 = w3, w4 = w4,

                                     2        2
                   2 q1 w4 q2 + w2 q1  - w3 q2                 -w3 q2 + q1 w4
              z2 = ----------------------------, z3 = w3, z4 = --------------, zc = zc}]
                                 2                                   q1
                               q1

    [4, 5]
\end{Verbatim}
}
\medskip

\noindent This implies that {\tt NonRedFams[3]} is equivalent to {\tt NonRedFams[4]}.
\medskip

$\bullet$ ~So, we exclude {\tt NonRedFams[3]} from further computation.

$\bullet$ ~Further, no member of {\tt NonRedFams[4]} is equivalent to a member of {\tt NonRedFams[5]}.

\medskip

Finally, we determine when the remaining families are representative families. Consider:
\medskip

{\footnotesize
\begin{Verbatim}[baselinestretch=0.7]
   [4, 4, {q1 = q1, q2 = q2, q3 = 0, q4 = q1, w2 = w2, w3 = w3, w4 = w4,
   
                                     2        2
                   2 q1 w4 q2 + w2 q1  - w3 q2                 -w3 q2 + q1 w4
             z2 = ----------------------------, z3 = w3, z4 = --------------, zc = zc}]
                                 2                                   q1
                               q1

                           q1 (-y4 + v4)
   [5, 5, {q1 = q1, q2 = - -------------, q3 = 0, q4 = q1, v4 = v4, w4 = w4, y4 = y4, z4 = w4, zc = zc}]
                                   2
                              zc w4
\end{Verbatim}
}
\medskip

We get that a member of {\tt NonRedFams[5]} is  equivalent to another member of this family for any value of~$y_4$. Without loss of generality, set $y_4=1$.
\medskip

$\bullet$ ~So, {\tt NonRedFams[5]} is a representative family with $y_4=1$.

\medskip

\noindent In {\tt NonRedFams[4]}, we obtain any value for $z_4$, say $a$, by setting $q_2 = (w_4-a)q_1/w_3$. (Note that by Table~1, $z_3$, identified by $w_3$, is not equal to 0.) This choice of $q_2$ does not violate the invertibility of $Q$. Further, it is easy to check that in this case, $z_2=w_2$. Thus, without loss of generality, set $z_4 =1$

\medskip

$\bullet$ ~So, {\tt NonRedFams[4]} is a representative family with $z_4=1$.

\vspace{.2in}

\noindent \framebox{\bf Step 3b} \quad Given the results above, we only need to execute this step for {\tt NonRedFams[4]} and {\tt NonRedFams[5]}, but we complete this for the whole list {\tt NonRedFams} as follows:

\medskip

{\footnotesize
\begin{verbatim}
IrConditions:=[]:
for i from 1 to nops(NonRedFams) do
Xw:=Multiply(NonRedFams[i][1][1],w): Yw:=Multiply(NonRedFams[i][1][2],w): Zw:=Multiply(NonRedFams[i][1][3],w):
Ir:=solve([p*conjugate(p)+q*conjugate(q)<>0,
           p*Xw[2][1]-q*Xw[1][1],p*Yw[2][1]-q*Yw[1][1],p*Zw[2][1]-q*Zw[1][1], zc<>0,zc^3<>1,zc^3<>-8]):
IrConditions:=[op(IrConditions),[i,Ir]]:
end do:
\end{verbatim}
}

\medskip

\noindent To see the output, enter:
\medskip

{\footnotesize
\begin{verbatim}
for i from 1 to nops(IrConditions) do      print(IrConditions[i]):       end do:
\end{verbatim}
}

 \medskip
 
 \noindent One gets that, for each $i$, all members of {\tt NonRedFams[i]} are irreducible matrix solutions of $S(1,1,c)$.

 \vspace{.2in}

\noindent \framebox{\bf Conclusion} \quad By entering {\tt eval(NonRedFams[4],[z4=1]);} and {\tt eval(NonRedFams[5],[y4=1]);},
one obtains
the representatives of equivalence classes of irreducible matrix solutions $(X,Y,Z)$ of equations~\eqref{eq:Srelations}, where $X$ is assumed to be one Jordan block. The output is listed in the  table below.

\begin{center}
{\footnotesize
\[
\framebox{\text{$
\begin{array}{lllll}
X = \begin{pmatrix}
0 & 1 \\
0 & 0 \end{pmatrix} 
&& 
Y= \begin{pmatrix}
-\frac{\beta}{cz_3} & -\frac{-cz_2^2z_3-cz_2+\frac{2\beta}{cz_3}}{z_3} \\
-cz_2z_3-c & \frac{\beta}{cz_3} \end{pmatrix}
&& 
Z = \begin{pmatrix}
-1 & z_2 \\
z_3 & 1 \end{pmatrix}\\\\
X = \begin{pmatrix}
0 & 1\\
0 & 0 \end{pmatrix} 
&&
Y = \begin{pmatrix}
-1 & \frac{1}{cz_4^2}\\
-cz_4^2 & 1 \end{pmatrix}
&&
Z = \begin{pmatrix}
-z_4 & \frac{2}{cz_4}\\
0 & z_4 \end{pmatrix}\\
\end{array}
$}}
\]
}
\end{center}
\begin{center}
{\small 
\[
\framebox{\text{$
\begin{array}{c}
\beta = -c^2z_2z_3-c^2+(3c^4z_2^2z_3^2+3c^4z_2z_3+c^4-cz_3^3+c^4z_3^3z_2^3)^{1/2}
\end{array}
$}}
\]
}
\end{center}
\smallskip
\begin{center}
{\small {\sc Table} 3. Representatives of equivalences classes of 2-dimensional irreps of $S(1,1,c)$, when $X$ is one Jordan block}
\end{center}

\vspace{.1in}


\section{Equivalence and Irreducibility: two Jordan block case} \label{sec:2Jordanblock}

As in the one Jordan block case, we wish to classify the matrix solutions from Steps 0-2 (in Section~\ref{sec:allreps}) up to equivalence and extract the irreducible equivalence classes.  So, we would like to know under what conditions is a matrix solution equivalent to a member of the same/different solution family.  We then specify conditions for which the representative of an equivalence class of  matrix solutions is irreducible. This achieved with Steps 3a and 3b, respectively, as described in Section~\ref{sec:methodology}.
In this section, we continue the algorithm of Section~\ref{sec:allreps} in the case when $X$ is two Jordan blocks.
\vspace{.2in}

\noindent \framebox{\bf Step 3a} \quad To execute Step 3a, as described in Section~\ref{sec:methodology}, enter the code for Step 3a provided in Section~\ref{sec:1Jordanblock}. (The memory and time for this operation was 
27068.0 MB and 523.78 seconds, respectively.)
The output of Steps 0-3a can be viewed by entering the following:
\medskip

{\footnotesize
\begin{verbatim}
for i from 1 to nops(BetweenFams) do      print(BetweenFams[i]):         end do:
\end{verbatim}
}

\vspace{.2in}

\noindent \framebox{\bf Interpretation} \quad Consider the following snippet of output:

\medskip

{\footnotesize
\begin{Verbatim}[baselinestretch=0.7]                                                                   
                v3 q4
   [1, 1, {q1 = -----, q2 = 0, q3 = 0, q4 = q4, v3 = v3, v4 = y4, w4 = z4, y3 = y3, y4 = y4, z4 = z4, zc = zc} 
                 y3
\end{Verbatim}
}
\medskip

\noindent  Note that $y_3 \neq 0$  in {\tt NonRedFams[1]} by Table 2.
\medskip

$\bullet$~ So, {\tt NonRedFams[1]} is a representative family with $y_3$ (identified with $v_3$) is 1 without loss of generality.

\medskip

\noindent Now take:

\medskip

{\footnotesize
\begin{Verbatim}[baselinestretch=0.7]                                                                   
                 v3 q4
   [2, 2,  {q1 = -----, q2 = 0, q3 = 0, q4 = q4, u4 = x4, v3 = v3, x4 = x4, y3 = y3, zc = zc}]
                  y3
\end{Verbatim}
}
\medskip

\noindent  Note that $y_3 \neq 0$  in {\tt NonRedFams[2]} by Table 2.
\medskip

$\bullet$~ So, {\tt NonRedFams[2]} is a representative family with $y_3$ (identified with $v_3$) is 1 without loss of generality.

\medskip

\noindent Consider the output:

\medskip

{\footnotesize
\begin{Verbatim}[baselinestretch=0.7]                                                                                                           
                 w3 q4
   [3, 3,  {q1 = -----, q2 = 0, q3 = 0, q4 = q4, v4 = y4, w3 = w3, w4 = z4, y4 = y4, z3 = z3, z4 = z4, zc = zc}
                  z3
\end{Verbatim}
}
\medskip

\noindent  Note that $z_3 \neq 0$  in {\tt NonRedFams[3]} by Table 2.
\medskip

$\bullet$~ So, {\tt NonRedFams[3]} is a representative family with $z_3$ (identified with $w_3$) is 1 without loss of generality.

\medskip

\noindent Next, consider the snippet of output below:
\medskip

{\footnotesize
\begin{Verbatim}[baselinestretch=0.7]
                               2
                          zc x4  q3
   [2, 4, {q1 = 0, q2 = - ---------, q3 = q3, q4 = 0, u4 = -x4, v3 = v3, x4 = x4, z3 = z3, zc = zc}]
                            z3 v3
\end{Verbatim}
}
\medskip

\noindent By Table 2, we have that  $z_3 \neq 0$ for {\tt NonRedFams[4]}. So by the output above,  we get that any member of {\tt NonRedFams[4]} is equivalent to a member {\tt NonRedFams[2]}.
\medskip

$\bullet$~ We exclude {\tt NonRedFams[4]} from further computation.
\medskip

\noindent Consider the output:

\medskip

{\footnotesize
\begin{Verbatim}[baselinestretch=0.7]                                                                                                           
                v3 q4                                                  z3 v3
   [5, 5, {q1 = -----, q2 = 0, q3 = 0, q4 = q4, v3 = v3, v4 = y4, w3 = -----, w4 = z4,
                 y3                                                     y3                                                               

           y3 = y3, y4 = y4, z3 = z3, z4 = z4, zc = zc}
\end{Verbatim}
}
\medskip

\noindent  We have that $y_3 \neq 0$  in {\tt NonRedFams[5]} by Table 2. Without loss of generality, we can take $y_3$ (identified with $v_3$) to be 1. In this case, $w_3=z_3$.

\medskip

$\bullet$~ So, {\tt NonRedFams[5]} is a representative family with $y_3 =1$.

\medskip

\noindent Now let us take:
\medskip

{\footnotesize
\begin{Verbatim}[baselinestretch=0.7]
                v3 q4                                                  z3 v3
   [5, 6, {q1 = -----, q2 = 0, q3 = 0, q4 = q4, v3 = v3, v4 = y4, w3 = -----, w4 = z4, 
                 y3                                                     y3

           y3 = y3, y4 = y4, z3 = z3, z4 = z4, zc = zc}]
\end{Verbatim}
}
\medskip

\noindent Note that by Table 2, we have  $y_3 \neq 0$ for {\tt NonRedFams[6]}. So by the output above,  we get that any member of {\tt NonRedFams[6]} is equivalent to a member {\tt NonRedFams[5]}.
\medskip

$\bullet$~ We exclude {\tt NonRedFams[6]} from further computation.
\medskip

But we still need to analyze the equivalence between members of {\tt NonRedFams[1]}, {\tt NonRedFams[2]}, {\tt NonRedFams[3]}, {\tt NonRedFams[5]}. In this case, the output is easier to interpret if we run Step 3b before Step 3a again.
\vspace{.2in}

\noindent \framebox{\bf Step 3b} \quad Given the results above, we only need to execute this step for {\tt NonRedFams[1]}, {\tt NonRedFams[2]}, {\tt NonRedFams[3]}, {\tt NonRedFams[5]}, but we complete this for the whole list {\tt NonRedFams} by entering the code for Step 3b provided in Section~\ref{sec:1Jordanblock}. Consider the snippets:

\medskip

{\footnotesize
\begin{Verbatim}[baselinestretch=0.7]
              y4 q
   [1, {p = - ----, q = q, y3 = y3, y4 = y4, z4 = 0, zc = zc}]
               y3

   [2, {p = 0, q = q, x4 = 0, y3 = y3, zc = zc}]

              z4 q
   [3, {p = - ----, q = q, y4 = 0, z3 = z3, z4 = z4, zc = zc}]
               z3

                                            2
   [5, {p = 0, q = q, y3 = 0, y4 = RootOf(_Z  + 1 + _Z) z4, z3 = z3, z4 = z4, zc = zc},

              z4 q                       z4 y3
       {p = - ----, q = q, y3 = y3, y4 = -----, z3 = z3, z4 = z4, zc = zc}]
               z3                         z3
\end{Verbatim}
}

\medskip

\noindent We obtain that: 

$\bullet$~ members of {\tt NonRedFams[1]}, {\tt NonRedFams[2]}, {\tt NonRedFams[3]} are irreducible
 precisely when  $z_4 \neq 0$, \linebreak 
\indent \indent$x_4 \neq 0$, $y_4 \neq 0$, respectively, and
 
 $\bullet$~  members of {\tt NonRedFams[5]} are irreducible
 precisely when \{$y_3 \neq 0$, $y_4 \neq e^{\pm 2 \pi i/3} z_4$\} or \{$y_3 z_4 \neq y_4 z_3$\}.

\vspace{.2in}

\noindent \framebox{\bf Step 3a again} \quad We execute Step 3a again for 

\medskip

{\footnotesize
\begin{Verbatim}
NewNonRedFams[1]:=eval(NonRedFams[1],[y3=1]):   NewNonRedFams[2]:=eval(NonRedFams[2],[y3=1]):
NewNonRedFams[3]:=eval(NonRedFams[3],[z3=1]):   NewNonRedFams[4]:=eval(NonRedFams[5],[y3=1]):
\end{Verbatim}
}

\medskip

\noindent The code and output is:

\medskip

{\footnotesize
\begin{Verbatim}
NewBetweenFams:=[]:                 ChangeVars:=[x1=u1,x4=u4,y1=v1,y2=v2,y3=v3,y4=v4,z1=w1,z2=w2,z3=w3,z4=w4]:
Q:=<<q1,q3|q2,q4>>:                 Qi:=MatrixInverse(Q):
for i from 1 to 4 do
  Xconj:=Multiply(Q,Multiply(eval(NewNonRedFams[i][1][1],ChangeVars),Qi)):
  Yconj:=Multiply(Q,Multiply(eval(NewNonRedFams[i][1][2],ChangeVars),Qi)):
  Zconj:=Multiply(Q,Multiply(eval(NewNonRedFams[i][1][3],ChangeVars),Qi)):
for j from i+1 to 4 do
  Xj:=NewNonRedFams[j][1][1]:         Yj:=NewNonRedFams[j][1][2]:          Zj:=NewNonRedFams[j][1][3]:
  Equiv1:= convert(Xj-Xconj,list):    Equiv2:= convert(Yj-Yconj,list):     Equiv3:= convert(Zj-Zconj,list):
  Conditions:= [zc<>0,zc^3<>-8,zc^3<>1,q1*q4-q2*q3<>0]:
  Equiv:= solve([op(Equiv1),op(Equiv2),op(Equiv3),op(Conditions)]):
  NewBetweenFams:=[op(NewBetweenFams),[i,j,Equiv]]:
end do:     end do:

for i from 1 to nops(NewBetweenFams) do          print(NewBetweenFams[i]):         end do:
\end{Verbatim}
}

{\footnotesize
\begin{Verbatim}[baselinestretch=0.7]
 > [1, 2, {q1 = q1, q2 = q1 v4, q3 = 0, q4 = q1, v4 = v4, w4 = 0, x4 = 0, zc = zc},

                                2
          {q1 = -q3 v4, q2 = -v4  q3, q3 = q3, q4 = 0, v4 = v4, w4 = 0, x4 = 0, zc = zc}, 

                                    q1 (q1 - q4)                 q2
          {q1 = q1, q2 = q2, q3 = - ------------, q4 = q4, v4 = ----, w4 = 0, x4 = 0, zc = zc}]
                                         q2                      q1

   [1, 3, {q1 = 0, q2 = q2, q3 = q3, q4 = 0, v4 = -y4, w4 = -y4, y4 = y4, z4 = y4,

                             3   2          2
            zc = RootOf(q3 _Z  y4  + 8 q3 y4  + 4 q2)}, 
        
                                                                      2
          {q1 = 0, q2 = q2, q3 = q3, q4 = 0, v4 = -y4, w4 = -RootOf(_Z  + 1 + _Z) y4, y4 = y4,

                       2                                3   2          2                 2
         z4 = RootOf(_Z  + 1 + _Z) y4, zc = RootOf(q3 _Z  y4  + 8 q3 y4  - 4 q2 RootOf(_Z  + 1 + _Z) - 4 q2)}]

   [1, 4]       [2, 3]       [2, 4]       [3, 4]
\end{Verbatim}
}

\medskip

We obtain that  $z_4=0$ in {\tt NewNonRedFams[1]} precisely when any member of {\tt NewNonRedFams[1]} is equivalent to a member of {\tt NewNonRedFams[2]}. On the other hand, we have that  $x_4=0$ in {\tt NewNonRedFams[2]} precisely when any member of {\tt NewNonRedFams[2]} is equivalent to a member of {\tt NewNonRedFams[1]}. But members of {\tt NewNonRedFams[1]} and {\tt NewNonRedFams[2]} are reducible when $z_4 = 0$ and $x_4=0$, respectively.

Now by a choice of $q_2$, $q_3$, we can consider $c$ to be free in {\tt [1, 3, ...]}. So, we get that  $z_4 = \zeta y_4$ for $\zeta^3 = 1$ in {\tt NewNonRedFams[3]} precisely when any member of {\tt NewNonRedFams[3]} is equivalent to a member of {\tt NewNonRedFams[1]}.
\medskip

Putting this together we conclude that:
\medskip

$\bullet$~ {\tt NewNonRedFams[1]}={\tt eval(NonRedFams[1],[y3=1])} is an irreducible representative family when   $z_4 \neq 0$.
 
 $\bullet$~ {\tt NewNonRedFams[2]}={\tt eval(NonRedFams[2],[y3=1])} is an irreducible representative family when  $x_4 \neq 0$.
 
  $\bullet$~ {\tt NewNonRedFams[3]}={\tt eval(NonRedFams[3],[z3=1])} is an irreducible representative family when $y_4 \neq 0$,  \linebreak
 \indent \indent and  there is no overlap with {\tt NewNonRedFams[1]} when $z_4 \neq \zeta y_4$ for $\zeta^3 =1$.

$\bullet$\hspace{.03in}{\tt NewNonRedFams[4]}={\tt eval(NonRedFams[5],[y3=1])} is an irreducible representative family when \linebreak
\indent \indent $y_4 \neq e^{\pm 2 \pi i/3} z_4$ \and $z_4 \neq y_4 z_3$.

 \vspace{.2in}

\noindent \framebox{\bf Conclusion} \quad We obtain
the following representatives of equivalence classes of irreducible matrix solutions $(X,Y,Z)$ of equations~\eqref{eq:Srelations}, where $X$ is assumed to be two Jordan blocks.

\begin{center}
{\footnotesize
\[
\framebox{\text{$
\begin{array}{llllll}
X= \begin{pmatrix}
\frac{cz_4^2}{2y_4} & 0\\
0 & -\frac{cz_4^2}{2y_4} \end{pmatrix}
&&
Y= \begin{pmatrix}
-y_4 & -\frac{y_4^3-z_4^3}{y_4}\\
1 & y_4 \end{pmatrix}
&&
Z = \begin{pmatrix}
-z_4 & -\frac{z_4(8 y_4^3 + c^3 z_4^3)}{4 y_4^2}\\
0& z_4 \end{pmatrix}
& \text{ for  $z_4 \neq 0$}\\\\

X= \begin{pmatrix}
-x_4 & 0\\
0 & x_4 \end{pmatrix}
&&
Y= \begin{pmatrix}
0 & 0\\
1 & 0 \end{pmatrix}
&&
Z = \begin{pmatrix}
0& -c^2 x_4^2\\
0&0\end{pmatrix}
& \text{ for  $x_4 \neq 0$}\\\\

X= \begin{pmatrix}
\frac{cy_4^2}{2z_4} & 0\\
0 & -\frac{cy_4^2}{2z_4} \end{pmatrix}
&&
Y= \begin{pmatrix}
-y_4 & -\frac{y_4(8 z_4^3 + c^3y_4^3)}{4 z_4^2}\\
0 & y_4 \end{pmatrix}
&&
Z = \begin{pmatrix}
-z_4 & \frac{y_4^3 - z_4^3}{z_4}\\
1& z_4 \end{pmatrix}
& \text{ for  \begin{tabular}{l}
$y_4 \neq 0$\\ $z_4 \neq \zeta y_4$, $\zeta^3 =1$
\end{tabular}}\\\\

X= \begin{pmatrix}
\frac{\gamma}{c^2z_3} & 0\\
0 & -\frac{\gamma}{c^2z_3} \end{pmatrix}
&&
Y= \begin{pmatrix}
-y_4 & -\frac{\frac{2z_4\gamma}{c^2z_3}+cy_4^2}{c}\\
1 & y_4 \end{pmatrix}
&&
Z = \begin{pmatrix}
-z_4 & -\frac{-\frac{2\gamma y_4}{c^2z_3}+cz_4^2}{cz_3}\\
z_3 & z_4 \end{pmatrix}
& \text{ for  \begin{tabular}{l}
$y_4 \neq e^{\pm 2 \pi i/3} z_4$\\ $z_4 \neq y_4 z_3$
\end{tabular}}
\end{array}
$}}
\]
}
\end{center}
\begin{center}
{\small 
\[
\framebox{\text{$
\begin{array}{c}
\gamma = -z_3^2z_4-y_4+(z_3^4z_4^2+2z_3^2z_4y_4+y_4^2+c^3z_3^3y_4^2+c^3z_3z_4^2-2c^3z_3^2y_4z_4)^{1/2}
\end{array}
$}}
\]
}
\end{center}
\smallskip
\begin{center}
{\small {\sc Table} 4. Representatives of equivalences classes of 2-dimensional irreps of $S(1,1,c)$, when $X$ is two Jordan blocks}
\end{center}

\vspace{.1in}


\section{Geometric parametrization of irreducible representations of $S(1,1,c)$} \label{sec:geometry}

Since the Sklyanin algebra $S = S(1,1,c)$ is module finite over its center, we can use the center $Z$ of $S$ to  provide a geometric parametrization of the set of equivalence classes of irreducible representations of $S$. (Recall by Definition~\ref{def:Sklyanin}, $c \neq 0$, $c^3 \neq 1,-8$.) Namely, we depict the {\it Azumaya locus} of $S(1,1,c)$ over its center \cite[III.1.7]{BGbook}. We refer the reader to \cite{Smith} for an introduction to affine varieties.

\begin{theorem} \label{thm:geometry} Let $Z$ be the center of the Sklyanin algebra $S=S(1,1,c)$.
\begin{enumerate}
\item We have that $Z$ is generated by $u_1 = x^2$, $u_2 = y^2$, $u_3 = z^2$, $g = cy^3 + yxz - xyz - cx^3$, subject to the degree 6 relation: 
$$F:= g^2 - c^2(u_1^3 + u_2^3 + u_3^3) - (c^3-4)u_1 u_2 u_3 = 0.$$
\item The set of equivalence classes of irreducible representations of $S$ are in bijective correspondence with the set of maximal ideals of the center $Z$ of $S$. Here, a representative $\psi$ of an equivalence class of irrep of $S$ corresponds to $(\ker \psi) \cap Z$, a maximal ideal of $Z$.

\smallskip

\item The geometric parametrization of the set of equivalence classes of irreducible representations of $S$ is the 3-dimensional affine variety ({\it 3-fold})
$$X_c:=\mathbb{V}(F) \in \mathbb{C}^4_{\{u_1, u_2, u_3, g\}}.$$ 
In particular, $X_c \setminus \{\underline{0}\}$ is the Azumaya locus of $S$ over $Z$. Indeed, points of $X_c \setminus \{\underline{0}\}$ (the smooth locus of $X_c$) correspond to irreducible 2-dimensional representations of $S$, and the origin of $X_c$ corresponds to the trivial representation of $S$.
\end{enumerate}
\end{theorem}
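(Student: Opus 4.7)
The plan is to prove parts (a), (b), (c) in sequence, leveraging the fact (Proposition~\ref{prop:irrep}) that $S := S(1,1,c)$ is module-finite over its center $Z$ with PI degree $2$, together with the explicit equivalence-class data obtained in Sections~\ref{sec:1Jordanblock}--\ref{sec:2Jordanblock}.

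For part (a), I would first verify by direct symbolic computation (using the relations \eqref{eq:Srelations} with $a=b=1$) that each of $u_1=x^2$, $u_2=y^2$, $u_3=z^2$, and $g=cy^3+yxz-xyz-cx^3$ commutes with $x$, $y$, and $z$; this places them in $Z$. Next, I would verify the identity $F = g^2 - c^2(u_1^3+u_2^3+u_3^3) - (c^3-4)\,u_1u_2u_3 = 0$ by expanding $g^2$ and reducing modulo the defining relations of $S$ — a routine but lengthy calculation that Maple can handle. To prove that these four elements generate $Z$, I would use the fact that $S$ is module-finite of rank $(\text{PI deg})^2 = 4$ over $Z$: the subring $R \subseteq Z$ generated by $\{u_1,u_2,u_3,g\}$ is a finitely generated commutative $\C$-algebra over which $S$ is a finitely generated module of rank $4$, so comparing the Hilbert series of $S$ with that of $R \cong \C[u_1,u_2,u_3,g]/(F)$ forces $R = Z$.

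Part (b) follows from standard PI-theoretic considerations. Given an irrep $\psi\colon S \to \End(V)$, Schur's lemma forces the center $Z$ to act by scalars on $V$, so $\mathfrak{m}_\psi := (\ker\psi)\cap Z$ is a maximal ideal of $Z$, and this ideal is invariant under conjugation $\psi \sim Q\,\psi(-)\,Q^{-1}$. Conversely, for any maximal ideal $\mathfrak{m}\subset Z$, the quotient $S/\mathfrak{m}S$ is a finite-dimensional $\C$-algebra that is either a full matrix algebra $\mathrm{Mat}_2(\C)$ (in which case $\mathfrak{m}$ lies in the Azumaya locus and corresponds to a unique $2$-dimensional irrep) or collapses, yielding only the trivial $1$-dimensional irrep. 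Applying \cite[III.1.7]{BGbook} and \cite[Proposition~3.1]{BG} to the present situation provides the bijection and identifies the Azumaya locus with precisely those maximal ideals whose irrep has dimension equal to the PI degree~$2$.

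For part (c), combining part (a) with Hilbert's Nullstellensatz identifies $\mathrm{MaxSpec}(Z)$ with the affine variety $X_c := \mathbb{V}(F) \subset \C^4$. To identify the Azumaya locus with $X_c\setminus\{\underline{0}\}$, I would apply the Jacobian criterion: computing $\partial F/\partial g = 2g$ together with the three partials $\partial F/\partial u_i$, one checks that on $X_c$ all four partials vanish simultaneously only at the origin, so $X_c\setminus\{\underline{0}\}$ is exactly the smooth locus of $X_c$. Since the complement of the Azumaya locus is contained in the singular locus (by \cite[III.1.7]{BGbook}), the trivial representation clearly corresponds to $\mathfrak{m}_0 = (u_1,u_2,u_3,g)$, and it remains to verify that every point of $X_c\setminus\{\underline{0}\}$ is actually hit by one of the explicit irreps in Tables 3 and 4. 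I would do this by computing the tuple $(u_1,u_2,u_3,g)$ on each representative family and checking that, as the free parameters vary, the union of images exhausts $X_c\setminus\{\underline{0}\}$.

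The main obstacle is the generation claim in part (a): centrality and the polynomial identity $F=0$ are mechanical, but proving that $\{u_1,u_2,u_3,g\}$ generates the \emph{entire} center requires genuine structural input, most naturally a Hilbert-series/rank comparison building on \cite[Theorem~7.1]{ATV2}. A secondary obstacle in part (c) is the explicit surjectivity check onto $X_c\setminus\{\underline{0}\}$: parametrizing each family of Tables 3 and 4 by its central character and verifying that the resulting image covers the smooth locus will itself require a Maple computation, and one must be careful about the birational overlaps between the "one Jordan block" and "two Jordan block" cases that were flagged by Step 3a.
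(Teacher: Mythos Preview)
Your proposal is broadly correct and follows the same skeleton as the paper, but two of your subarguments take detours that the paper avoids.

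For part (a), the paper does not attempt a Hilbert-series/rank comparison to show that $u_1,u_2,u_3,g$ generate all of $Z$. Instead it simply invokes \cite[Theorems~3.7, 4.6, 4.7]{SmithTate}, which already guarantee that $Z$ is generated by three algebraically independent degree-$2$ elements and one degree-$3$ element subject to a single degree-$6$ relation; the only computational work is then to exhibit the specific generators and the specific relation $F$, done via a GAP/GBNP Gr\"{o}bner-basis check. Your Hilbert-series route would work in principle, but it requires knowing that $S$ is free (or at least of constant rank) over $R$, which is itself nontrivial; citing Smith--Tate is both shorter and safer.

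For part (c), the paper does not use a general ``non-Azumaya locus $\subseteq$ singular locus'' theorem, nor does it need your proposed surjectivity check against Tables~3--4. The identification $\text{Az}(S) = \text{maxSpec}(Z)\setminus\{Z_+\}$ is already obtained inside the proof of part (b), directly from Lemma~\ref{lem:irrep2}: the only $1$-dimensional irrep is the trivial one, so every nonzero maximal ideal of $Z$ must support a $2$-dimensional irrep and hence is Azumaya. The Jacobian computation is then performed only to justify the parenthetical claim that $X_c\setminus\{\underline{0}\}$ coincides with the smooth locus. Your explicit surjectivity check is carried out in the paper, but as a separate remark after the proof, not as an ingredient of it.
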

\medskip

Taking a value of $c$, say $5$, we can visualize the 3-fold $X_c$ by taking  2-dimensional slices at various values of $u_1$. See Figure~\ref{fig:Xc=5} below.  The following images were generated with WolframAlpha \cite{Wolframalpha}.

\begin{figure}[h] 
\centering
\begin{subfigure}[b]{0.3\textwidth}
\includegraphics[width=\textwidth]{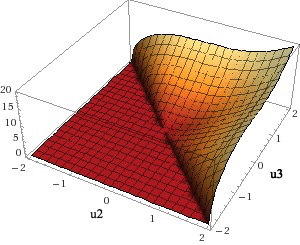} 
\end{subfigure}
\hspace{1in}
\begin{subfigure}[b]{0.32\textwidth}
\includegraphics[width=\textwidth]{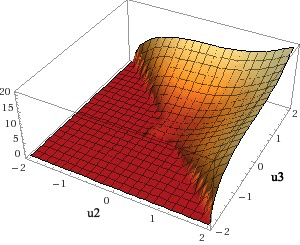}
\end{subfigure}

\vspace{.2in}

\begin{subfigure}[b]{0.32\textwidth}
\includegraphics[width=\textwidth]{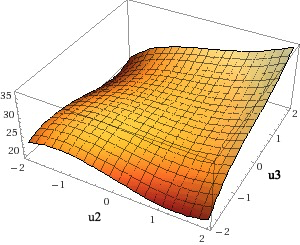}
\end{subfigure}
\hspace{1in}
\begin{subfigure}[b]{0.32\textwidth}
\includegraphics[width=\textwidth]{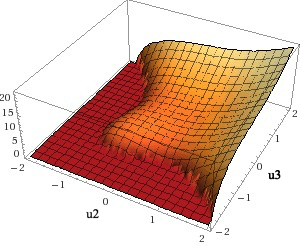}
\end{subfigure}
\caption{Real part of $\sqrt{25(u_1^3+u_2^3+u_3^3) + 21u_1u_2u_3}$ at $u_1 =0, 0.3, 1, 3$ (CW from the top left)}
 \label{fig:Xc=5}
\end{figure}

\noindent {\it Proof of Theorem~\ref{thm:geometry}}.
(a) We have that $Z$ is generated by three algebraically independent elements $u_1$, $u_2$, $u_3$ of degree 2 and one element $g$ of degree 3, subject to one relation $F$ of degree 6, by \cite[Theorems~3.7,~4.6, and~4.7]{SmithTate}. Now part (a) follows by direct computation in the algebra $S(1,1,c)$. One can do this by hand, but we execute this with the computer algebra software GAP  using the GBNP package for noncommutative Groebner basis \cite{GAP-GBNP}. 
We  check that $u_1, u_2, u_3, g$ commute with each of $x_1:=x$, $y_1:=y$, $z_1:=z$.

{\footnotesize 
\begin{multicols}{2}
\begin{verbatim}
LoadPackage( "GBNP" );
SetInfoLevel(InfoGBNP,0);
SetInfoLevel(InfoGBNPTime,0);
A:=FreeAssociativeAlgebraWithOne
     (Rationals, "x", "y", "z");
x:=A.x;; y:=A.y;; z:=A.z;; o:=One(A);; 
uerels:=[x*y+y*x+5*z*z, y*z+z*y+5*x*x, z*x+x*z+5*y*y];
uerelsNP:=GP2NPList(uerels);;
PrintNPList(uerelsNP);
GBNP.ConfigPrint(A);
GB:=SGrobner(uerelsNP);;
PrintNPList(GB);
x1:=[[[1]],[1]];;     y1:=[[[2]],[1]];; 
z1:=[[[3]],[1]];;
u1:=[[[1,1]],[1]];;   u2:=[[[2,2]],[1]];; 
u3:=[[[3,3]],[1]];;
g:=[[[2,2,2],[2,1,3],[1,2,3],[1,1,1]],[5,1,-1,-5]];;
\end{verbatim}

\begin{verbatim}

MulQA(x1,u1,GB) - MulQA(u1,x1,GB);
MulQA(x1,u2,GB) - MulQA(u2,x1,GB);
MulQA(x1,u3,GB) - MulQA(u3,x1,GB);

MulQA(y1,u1,GB) - MulQA(u1,y1,GB);
MulQA(y1,u2,GB) - MulQA(u2,y1,GB);
MulQA(y1,u3,GB) - MulQA(u3,y1,GB);

MulQA(z1,u1,GB) - MulQA(u1,z1,GB);
MulQA(z1,u2,GB) - MulQA(u2,z1,GB);
MulQA(z1,u3,GB) - MulQA(u3,z1,GB);

MulQA(x1,g,GB) - MulQA(g,x1,GB);
MulQA(y1,g,GB) - MulQA(g,y1,GB);
MulQA(z1,g,GB) - MulQA(g,z1,GB);
\end{verbatim}
\end{multicols}
}
To view {\small {\tt g}}, for instance, enter {\small {\tt PrintNP(g);}}. The output of the last twelve lines are all 0. Thus, $u_1, u_2, u_3, g$ are all central elements of $S(1,1,5)$. One can replace $c=5$ with various values of $c \neq 0, 1, -8$, and this yields the same output. 

Now to see that $F$ is the relation of $Z$, more care is needed. Enter:
\smallskip

{\footnotesize 
\begin{verbatim}
PrintNP(MulQA(g,g,GB)); 
PrintNP(MulQA(u1,MulQA(u1,u1,GB),GB));       PrintNP(MulQA(u2,MulQA(u2,u2,GB),GB));
PrintNP(MulQA(u3,MulQA(u3,u3,GB),GB));       PrintNP(MulQA(u1,MulQA(u2,u3,GB),GB));
\end{verbatim}
}
\smallskip

\noindent and compare terms to derive the coefficients of $F$ as claimed.

\medskip

(b) The arguments below are standard in ring theory and in representation theory, but we provide details for the reader's convenience. Recall from  Lemma~\ref{lem:irrep2} that all non-trivial irreducible representations of $S$ are of dimension 2.
Let maxSpec($A$) denote the set of maximal ideals of an algebra $A$. Moreover,   a {\it primitive} ideal of $A$ is an ideal that arises as the kernel of an irreducible representation of $A$; denote the set of such ideals by prim($A$). Take [Irrep($A$)] to be the set of equivalence classes of irreducible representations of~$A$. 

Since $S$ is PI, we see that there is a bijective correspondence between [Irrep($S$)] and prim($S$) as follows. Equivalent representations of $S$ have the same kernel, so we get a surjective map $\phi$: [Irrep($S$)]$\to$ prim($S$), given by $\psi \mapsto \ker \psi$.
On the other hand, take $P \in$ prim$(S)$, that is, the kernel of an irreducible representation $\psi$ of $S$. Then, $\psi$ is also an irreducible representation of $S/P$. Now $S/P \cong \text{Mat}_t(\mathbb{C})$ for $t = 1$ or 2 by \cite[Theorem~I.13.5(1)]{BGbook}, and all irreducible representations of matrix algebras are equivalent to the identity representation by the Skolem-Noether theorem. So, $P \in$ prim$(S)$ has a unique preimage $\phi^{-1}(P)$ in [Irrep($S$)].

Moreover, we see that there is a bijective correspondence  [Irrep($S$)] and maxSpec($S$) as follows. Maximal ideals are primitive. On the other hand, take $P$ a nonzero primitive ideal of $S$. Again, by \cite[Theorem~I.13.5(1)]{BGbook}, $S/P$ is isomorphic to a matrix ring, which is simple. Thus, $P$ is a maximal ideal of $S$.
So it suffices to show that the ideals of  maxSpec($S$) and of maxSpec($Z$) are in bijective correspondence.

Consider the map
$$\eta: \text{maxSpec($S$)} \to \text{maxSpec($Z$)}, \quad M \mapsto M \cap Z.$$ 
The map $\eta$ is well-defined and surjective by \cite[Proposition~III.1.1(5)]{BGbook}. Now by Lemma~\ref{lem:irrep2}, the trivial representation of $S$ corresponds to the augmentation (maximal) ideal $S_+:= (x,y,z)$ of $S$, and the set of equivalence classes of non-trivial irreducible representations of $S$ correspond to the maximal ideals $M$ of $S$ not equal to $S_+$.
Thus, $\eta(S_+) = Z_+$, and  it suffices to show that the ideals of  maxSpec($S$)$\setminus S_+$ and of maxSpec($Z$)$\setminus Z_+$ are in bijective correspondence.

Take Az($S$) to be the set of maximal ideals $\mathfrak{m}$ of $Z$ so that (i) $\mathfrak{m}= M \cap Z$ for $M \in$ maxSpec($S$), and (ii) $M$ is  the kernel of a 2-dimensional irreducible representation of $S$. Namely, Az($S$) is the {\it Azumaya locus} of $S$ over $Z$.  Consider the map 
$$\rho: \text{Az(S)} \to \text{maxSpec($S$)}, \quad \mathfrak{m} \mapsto \mathfrak{m}S.$$
We get that $\eta \rho(\mathfrak{m}) = \eta (\mathfrak{m}S) = (\mathfrak{m}S) \cap Z = \mathfrak{m}$; the last equality holds by \cite[Theorem~III.1.6(3)]{BGbook}. So, $\eta$ is bijective on $\rho(\text{Az($S$)})$.
 Since Az($S$) = maxSpec($Z$)$\setminus Z_+$ by Lemma~\ref{lem:irrep2}, and since $\rho$ is injective, we conclude that $\eta$ is bijective on maxSpec($S$)$\setminus S_+$, as desired.

\medskip

(c) To see that the claim  follows from parts (a) and (b), we have to show that the smooth locus of $X_c$ consists of all nonzero points. This is achieved by using \cite[Theorem~6.2]{Smith}; namely, we verify that  the common zero set of the vanishing of all partial derivatives of $F$ is the origin of $X_c$:
\medskip

{\footnotesize
\begin{verbatim}
   F:=g^2-c^2*(u1^3+u2^3+u3^3) - (c^3-4)*u1*u2*u3;              
   solve([diff(F,g),diff(F,u1),diff(F,u2),diff(F,u3)],[g,u1,u2,u3]);
   >                [[g = 0, u1 = 0, u2 = 0, u3 = 0]]
\end{verbatim}
}
\vspace{-.2in}

\qed

\medskip

\begin{remark}
One may push the result above further and study the {\it moduli space} (or {\it GIT quotient}) that parametrizes the set of equivalence classes of irreducible representations of $S$. But this is not the focus of this work here.
On the other hand, if one wants to understand irreducible representations of $S$ topologically, then one could consider the {\it Jacobson topology} (or {\it hull-kernel topology}) on the set prim($S$). 
\end{remark}

\begin{remark}
The following code verifies that the irreps produced in Tables~3 and~4  indeed correspond to points on $X_c$. One must first run the algorithm in the previous sections: Sections~\ref{sec:allreps} and~\ref{sec:1Jordanblock} for the one Jordan block case, and Sections~\ref{sec:allreps} and~\ref{sec:2Jordanblock} for the two Jordan block case. 

{\footnotesize 
\begin{multicols}{2}
\begin{verbatim}
# For X being 1 Jordan block
# Uncomment one of the following E
# E:=eval(NonRedFams[4],[z4=1]);
# E:=eval(NonRedFams[5],[y4=1]);
\end{verbatim}

\begin{verbatim}
## For X being 2 Jordan block
## Uncomment one of the following E
## E:=eval(NonRedFams[1],[y3=1]);
## E:=eval(NonRedFams[2],[y3=1]);
## E:=eval(NonRedFams[3],[z3=1]);
## E:=eval(NonRedFams[5],[y3=1]);
\end{verbatim}

\begin{verbatim}
U1:=Multiply(E[1][1],E[1][1]);  
U2:=Multiply(E[1][2],E[1][2]);  
U3:=Multiply(E[1][3],E[1][3]);  
C:=<<zc,0|0,zc>>; 
G:= Multiply(Multiply(Multiply(
             C,E[1][2]),E[1][2]),E[1][2])
   +Multiply(Multiply(E[1][2],E[1][1]),E[1][3])
   -Multiply(Multiply(E[1][1],E[1][2]),E[1][3])
   -Multiply(Multiply(Multiply(
             C,E[1][1]),E[1][1]),E[1][1]);
C2:=<<zc^2,0|0,zc^2>>;
C34:=<<zc^3-4,0|0,zc^3-4>>;
F:= Multiply(G,G) 
   -Multiply(C2,Multiply(Multiply(U1,U1),U1)
   +Multiply(Multiply(U2,U2),U2)
   +Multiply(Multiply(U3,U3),U3)) 
  - Multiply(C34,Multiply(Multiply(U1,U2),U3));
  
simplify(F);
\end{verbatim}

\begin{Verbatim} [baselinestretch=0.7]
 >                 [0    0]
                   [      ]
                   [0    0]
\end{Verbatim}
\end{multicols}
}
\noindent By evaluating {\tt simplify(U1);}, {\tt simplify(U2);}, {\tt simplify(U3);}, {\tt simplify(G);} for each of the six irreducible representative families above, we obtain the corresponding points on the 3-fold $X_c = \mathbb{V}(F) \subset \mathbb{C}^4_{\{u_1,u_2,u_3,g\}}$.
\end{remark}
\smallskip


\section{Irreducible representations of $\mathbb{C}_{-1}[x,y]:=\mathbb{C}\langle x,y \rangle/(xy+yx)$} \label{sec:skewpoly}

The purpose of this section is to illustrate our algorithm of Sections~\ref{sec:methodology}-\ref{sec:2Jordanblock} (Steps 0-2, 3a, 3b) by replacing the Sklyanin algebra $S(1,1,c)$ with a class of algebras that are much better understood. Here, we study irreducible representations of the skew polynomial ring: 
$$\mathbb{C}_{-1}[x,y] := \mathbb{C}\langle x,y \rangle/(xy+yx),$$
up to equivalence; these results are well-known. At the end of the section, we provide a geometric parametrization of these irreps, akin to Theorem~\ref{thm:geometry} for $S(1,1,c)$.
Now we remind the reader of a few preliminary results.

\begin{lemma} \label{lem:skew}
\begin{enumerate}
\item The 1-dimensional irreps  of $\mathbb{C}_{-1}[x,y]$ are, up to equivalence,  of the form 
\begin{equation} \label{eq:C-1x,y1}
\rho_{\alpha}: \mathbb{C}_{-1}[x,y] \to \mathbb{C}, ~~x \mapsto \alpha,  ~~y \mapsto 0, \text{ for } \alpha \in \mathbb{C}
 \quad  \text{and} \quad \rho_{\beta}: \mathbb{C}_{-1}[x,y] \to \mathbb{C}, ~~ x \mapsto 0,  ~~y \mapsto \beta,\text{ for } \beta \in \mathbb{C}.
\end{equation}
\item All irreducible representations of $\mathbb{C}_{-1}[x,y]$ are finite-dimensional, of at most dimension 2.
\end{enumerate}
\end{lemma}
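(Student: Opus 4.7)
For part (a), the plan is to compute directly. A one-dimensional representation sends $x \mapsto \alpha$ and $y \mapsto \beta$ for some scalars $\alpha,\beta \in \mathbb{C}$, and the defining relation $xy+yx=0$ collapses to the scalar equation $2\alpha\beta = 0$. Hence either $\alpha = 0$ or $\beta = 0$, which produces exactly the two families $\rho_\alpha$ and $\rho_\beta$ asserted in \eqref{eq:C-1x,y1}. There is nothing to say about equivalence since $GL_1(\mathbb{C})$ acts trivially by conjugation on one-dimensional representations, so these are already in canonical form.

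For part (b), the plan is to mimic the strategy of Proposition~\ref{prop:irrep}: exhibit $\mathbb{C}_{-1}[x,y]$ as a module-finite PI algebra over a commutative affine center, identify its PI degree, and then quote the standard results from \cite{MR} and \cite{BG} cited there. First I would observe that $x^2$ and $y^2$ are central: indeed $x^2 y = x(xy) = -x(yx) = -(xy)x = (yx)x = yx^2$, and symmetrically $xy^2 = y^2 x$. Thus $\mathbb{C}[x^2,y^2] \subseteq Z(\mathbb{C}_{-1}[x,y])$, and a quick monomial argument (using the normal form $x^i y^j$ obtained from the relation $yx = -xy$) shows that $\{1, x, y, xy\}$ is a free basis for $\mathbb{C}_{-1}[x,y]$ as a $\mathbb{C}[x^2,y^2]$-module. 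In particular $\mathbb{C}_{-1}[x,y]$ is module-finite over its center.

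Next I would verify that the PI degree is $2$, either by producing an explicit two-dimensional irreducible representation (e.g.\ $x \mapsto \begin{pmatrix}1 & 0 \\ 0 & -1\end{pmatrix}$, $y \mapsto \begin{pmatrix}0 & 1 \\ 1 & 0\end{pmatrix}$, whose images generate $\mathrm{Mat}_2(\mathbb{C})$) or by invoking the fact that the generic fiber of $\mathbb{C}_{-1}[x,y]$ over $\mathbb{C}[x^2,y^2]$ is a quaternion-type algebra of rank $4$. Applying \cite[Theorem~13.10.3(a)]{MR} then gives that every irreducible representation is finite-dimensional, and \cite[Proposition~3.1]{BG} bounds the dimension by the PI degree, which is $2$.

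The only real obstacle is the bookkeeping needed to confirm that $\mathbb{C}_{-1}[x,y]$ is module-finite over its center and that the PI degree equals $2$; everything else is either a direct scalar calculation or a citation of the same machinery already used in Proposition~\ref{prop:irrep}. Since this is a well-known algebra, one could alternatively shortcut the PI-degree computation by producing the explicit two-dimensional irrep above and noting that no irrep can exceed dimension $2$ because $\mathbb{C}_{-1}[x,y]$ embeds in $\mathrm{Mat}_2(\mathbb{C}[x^2,y^2])$ after localizing at the generic point of the center.
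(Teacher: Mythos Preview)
Your proposal is correct and follows essentially the same approach as the paper. For part (a) your direct scalar computation is precisely the ``easy computation'' the paper invokes; for part (b) both you and the paper appeal to the PI machinery (in particular \cite[Proposition~3.1]{BG}), the only difference being that where the paper cites \cite[Example~I.14.3(1)]{BGbook} to identify the PI degree as $2$, you instead verify this by hand via the center $\mathbb{C}[x^2,y^2]$ and the rank-$4$ basis $\{1,x,y,xy\}$---a computation the paper itself carries out later in Corollary~\ref{cor:skew}(a).
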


\begin{proof}
(a) This follows by an easy computation.

(b) By \cite[Proposition~3.1]{BG} and \cite[Example~I.14.3(1)]{BGbook}, an irrep of $\mathbb{C}_{-1}[x,y]$ is of at most dimension 2.
\end{proof}

With the lemma above, we see that to classify irreps of $\mathbb{C}_{-1}[x,y]$, we just need to compute the 2-dimensional irreps $$\psi: \mathbb{C}_{-1}[x,y] \to Mat_2(\mathbb{C}), \quad \quad x \mapsto X, ~y \mapsto Y,$$ up to equivalence. 

Without loss of generality, we can assume that $X$ is in Jordan form, that is, either one Jordan block or  diagonal.  
For 1 Jordan block, uncomment {\tt \#}. For 2 Jordan blocks, uncomment {\tt \#\#}. Moreover, the following code was adapted from Sections~\ref{sec:methodology}-\ref{sec:2Jordanblock} by removing all lines and conditions involving the generator $z$, and by
changing the defining relations of the algebra. 
\medskip

{\footnotesize \begin{multicols}{2}
\noindent \framebox{{\bf Adapted Steps 0 and 1}}
{\footnotesize
\begin{verbatim}
restart;                 with(LinearAlgebra):
#  X:= <<x1, 0|1, x1>>:  Y:= <<y1, y3|y2, y4>>:  
## X:= <<x1, 0|0, x4>>:  Y:= <<y1, y3|y2, y4>>:  
XY:= Multiply(X,Y):      YX:= Multiply(Y,X):                                  
Eq1:= convert(XY+YX,list):    
#  Vars:= {x1,y1,y2,y3,y4}:
## Vars:= {x1,x4,y1,y2,y3,y4}:
M:= solve([op(Eq1)],Vars):
L:= []:
for i from 1 to nops([M]) do  T:=map(allvalues,{M[i]}):
for j from 1 to nops(T) do    L:=[op(L),T[j]]:
end do:       end do:
Families:=[]:
for i from 1 to nops(L) do          
Families:=[op(Families), [eval(X,L[i]),eval(Y,L[i])]]:
end do:
\end{verbatim}
}
\medskip
\end{multicols}}

\vspace{-.22in}

\begin{center}
\rule{12cm}{0.4pt}
\end{center} 

\vspace{-.12in}

{\footnotesize \begin{multicols}{2}
\noindent \framebox{{\bf Adapted Step 2}}
{\footnotesize
\begin{verbatim}
w:=<<p,q>>:        NonRedFams:=[]:
for i from 1 to nops(Families) do
Xw:=Multiply(Families[i][1],w):     
Yw:=Multiply(Families[i][2],w):     
NonRed:=solve([p*conjugate(p)+q*conjugate(q)<>0, 
     p*Xw[2][1]-q*Xw[1][1], p*Yw[2][1]-q*Yw[1][1], 
     op(Conditions)],[p,q]):
	if NonRed=[] then      
	NonRedFams:=[op(NonRedFams),[Families[i]]]:  
	end if:  end do:
\end{verbatim}
}
\medskip
\end{multicols}}

\vspace{-.22in}

\begin{center}
\rule{12cm}{0.4pt}
\end{center} 

\vspace{-.12in}

{\footnotesize \begin{multicols}{2}
\noindent \framebox{{\bf Adapted Step 3a}}
{\footnotesize
\begin{verbatim}
BetweenFams:=[]:        
ChangeVars:=[x1=u1,x4=u4,y1=v1,y2=v2,y3=v3,y4=v4]:
Q:=<<q1,q3|q2,q4>>:                 
Qi:=MatrixInverse(Q):
for i from 1 to nops(NonRedFams) do
	Xconj:=Multiply(Q,Multiply(eval(
	    NonRedFams[i][1][1],ChangeVars),Qi)):
	Yconj:=Multiply(Q,Multiply(eval(
	    NonRedFams[i][1][2],ChangeVars),Qi)):
	for j from i to nops(NonRedFams) do
		Xj:=NonRedFams[j][1][1]:   Yj:=NonRedFams[j][1][2]:             
		Equiv1:= convert(Xj-Xconj,list):    
		Equiv2:= convert(Yj-Yconj,list):     
		Conditions:= [q1*q4-q2*q3<>0]:
		Equiv:= solve([op(Equiv1),op(Equiv2),op(Conditions)]):
		BetweenFams:=[op(BetweenFams),[i,j,Equiv]]:
	end do:     end do:
\end{verbatim}
}
\medskip
\end{multicols}}

\vspace{-.22in}

\begin{center}
\rule{12cm}{0.4pt}
\end{center} 

\vspace{-.12in}

{\footnotesize \begin{multicols}{2}
\noindent \framebox{{\bf Adapted Step 3b}}
{\footnotesize
\begin{verbatim}
IrConditions:=[]:
for i from 1 to nops(NonRedFams) do
Xw:=Multiply(NonRedFams[i][1][1],w): 
Yw:=Multiply(NonRedFams[i][1][2],w): 
Ir:=solve([p*conjugate(p)+q*conjugate(q)<>0,
     p*Xw[2][1]-q*Xw[1][1],p*Yw[2][1]-q*Yw[1][1]]):
IrConditions:=[op(IrConditions),[i,Ir]]:
end do:
\end{verbatim}
}
\medskip
\end{multicols}}
\medskip

We obtain the result below.

\begin{proposition} \label{prop:skew}
All irreducible representations $\phi$ of $\mathbb{C}_{-1}[x,y]$ are of dimensions 1 or 2. In dimension 1, irreps are of the form \eqref{eq:C-1x,y1}. In dimension 2, all irreps, up to equivalence, take the form 
\begin{equation} \label{eq:C-1x,y2}
\psi_{\alpha, \beta} : \mathbb{C}_{-1}[x,y] \longrightarrow Mat_2(\mathbb{C}), \quad x\mapsto \begin{pmatrix} -\alpha & 0\\ 0 & \alpha \end{pmatrix}, 
~~y\mapsto \begin{pmatrix} 0& 1\\ \beta & 0 \end{pmatrix}, 
\end{equation}
\text{ for $\alpha, \beta \in \mathbb{C}$ with $\alpha \beta \neq 0$.}
\end{proposition}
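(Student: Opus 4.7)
The plan is to apply the adapted algorithm (Steps 0--1, 2, 3a, 3b) displayed just above the proposition, and then interpret its output. By Lemma~\ref{lem:skew}(b), every irrep of $\mathbb{C}_{-1}[x,y]$ has dimension $\le 2$, and the $1$-dimensional case is handled by Lemma~\ref{lem:skew}(a). Thus it suffices to classify $2$-dimensional irreps, which amounts to finding all $2\times 2$ matrix solutions $(X,Y)$ of $XY+YX=0$ up to simultaneous conjugation, and singling out those that are irreducible. As in Section~\ref{sec:methodology}, we may assume $X$ is in Jordan form and split into the one-Jordan-block and two-Jordan-block cases.

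First I would work out the one-Jordan-block case by hand (as a sanity check on the Maple output): with $X=\bigl(\begin{smallmatrix}x_1 & 1\\ 0 & x_1\end{smallmatrix}\bigr)$ and $Y=\bigl(\begin{smallmatrix}y_1 & y_3\\ y_2 & y_4\end{smallmatrix}\bigr)$, the $(2,1)$-entry of $XY+YX$ is $2x_1y_2$, forcing $x_1=0$ or $y_2=0$. A short case-analysis shows that in every resulting family, the subspace $\mathbb{C}e_1$ is stable under both $X$ and $Y$, so Corollary~\ref{cor:irred} rules out irreducibility. Step~2 of the adapted algorithm will therefore discard this case entirely.

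In the two-Jordan-block case, $X=\mathrm{diag}(x_1,x_4)$, and $XY+YX=0$ yields $x_1y_1=x_4y_4=0$ and $(x_1+x_4)y_2=(x_1+x_4)y_3=0$. If $x_1+x_4\ne 0$ then $y_2=y_3=0$, $Y$ is diagonal, and again $\mathbb{C}e_1$ is stable; if $x_1=x_4=0$ then $X=0$ and $Y$ always has an eigenvector, again reducible. So the only surviving sub-family has $x_4=-x_1$ with $x_1\ne 0$, $y_1=y_4=0$, and $Y=\bigl(\begin{smallmatrix}0 & y_3\\ y_2 & 0\end{smallmatrix}\bigr)$. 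Step~3b will pick out the condition $y_2 y_3\ne 0$ (otherwise one of the coordinate axes is stable).

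Finally, Step~3a accounts for equivalence. Conjugating the surviving solution by $Q=\mathrm{diag}(1,y_3)$ fixes $X$ (since $X$ is diagonal) and sends $Y$ to $\bigl(\begin{smallmatrix}0 & 1\\ y_2 y_3 & 0\end{smallmatrix}\bigr)$. Setting $\alpha:=-x_1$ and $\beta:=y_2y_3$ yields exactly the form \eqref{eq:C-1x,y2} with $\alpha\beta\ne 0$, completing the classification. The main obstacle I anticipate is bookkeeping rather than conceptual: checking, across the several sub-cases produced by Maple in Step~1, that no genuinely new irreducible equivalence class is missed and that the normalization carried out by hand above is consistent with what Step~3a returns. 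Since the equivalence action only involves a single scalar rescaling on one coordinate axis, I expect Step~3a to verify uniqueness of the normal form \eqref{eq:C-1x,y2} without difficulty.
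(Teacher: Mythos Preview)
Your proposal is correct and follows essentially the same strategy as the paper: reduce to $X$ in Jordan form, discard the one-Jordan-block case as entirely reducible, and in the two-Jordan-block case isolate the family $X=\mathrm{diag}(x_1,-x_1)$, $Y=\bigl(\begin{smallmatrix}0 & *\\ * & 0\end{smallmatrix}\bigr)$ with nonzero off-diagonal entries, then normalize one off-diagonal entry to~$1$. The only difference is presentational: the paper reads this off directly from the Maple output of Steps~0--3b (listing the five {\tt NonRedFams} and the relevant {\tt BetweenFams}/{\tt IrConditions} snippets), whereas you verify the same case analysis by hand, which is a welcome sanity check but not a different argument.
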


\noindent {\it Proof}.
The first two statements follow from Lemma~\ref{lem:skew}. To get the last statement, we run the adapted algorithm above. We only obtain reducible representations in the one Jordan block case; just enter {\tt NonRedFams;} and {\tt IrConditions;} to see this. 

On the other hand in the two Jordan block case, we first print off {\tt NonRedFams}.

\medskip

\footnotesize{
\begin{Verbatim}[baselinestretch=0.7]
    [0    0]  [y1    y2]          [x1    0]  [0    0 ]          [0    0 ]  [y1    0]     
  [[[      ], [        ]]]      [[[       ], [       ]]]      [[[       ], [       ]]] 
    [0    0]  [y3    y4]          [0     0]  [0    y4]          [0    x4]  [0     0]     
    
    [x1    0 ]  [0    0]          [-x4    0 ]  [0     y2]
  [[[        ], [      ]]]      [[[         ], [        ]]]
    [0     x4]  [0    0]          [ 0     x4]  [y3    0 ]
\end{Verbatim}
}

\medskip

\noindent {\normalsize Consider the following snippets of output from  {\tt BetweenFams}. \par}

\medskip

{\footnotesize
\begin{Verbatim}[baselinestretch=0.7]
   [2, 3, {q1 = 0, q2 = q2, q3 = q3, q4 = 0, u1 = x4, v4 = y1, x4 = x4, y1 = y1}]

                                                            y3 q2       y2 q3                                    
   [5, 5, {q1 = 0, q2 = q2, q3 = q3, q4 = 0, u4 = -x4, v2 = -----, v3 = -----, x4 = x4, y2 = y2, y3 = y3}]
                                                             q3          q2                                    
 \end{Verbatim}
}

\medskip

\noindent {\normalsize Therefore, any member of {\tt NonRedFams[3]} is equivalent to a member of {\tt NonRedFams[2]}. 
\medskip

$\bullet$ So, {\tt NonRedFams[3]} is removed from our consideration.
\medskip

\noindent Moreover, {\tt NonRedFams[5]} forms an equivalence family as $x_4, y_2, y_3$ are free.

Take into consideration the output from {\tt IrConditions}.\par}

\medskip

 {\footnotesize
\begin{Verbatim}[baselinestretch=0.7]
                               2                2
                           y3 p  + p y4 q - y2 q
   [1, {p = p, q = q, y1 = ----------------------, y2 = y2, y3 = y3, y4 = y4}]
                                    p q

   [2, {p = p, q = 0, x1 = x1, y4 = y4}]       
   
   [4, {p = p, q = 0, x1 = x1, x4 = x4}]

   [5, {p = p, q = 0, x4 = x4, y2 = y2, y3 = 0},   {p = 0, q = q, x4 = x4, y2 = 0, y3 = y3},

                                       2
                                   y3 p
       {p = p, q = q, x4 = 0, y2 = -----, y3 = y3}]
                                     2
                                    q
\end{Verbatim}
}

\medskip

{\normalsize Now, we can conclude that {\tt NonRedFams[1]}, {\tt NonRedFams[2]}, {\tt NonRedFams[4]} consist of reducible representations, so these families are eliminated from our consideration. Further, {\tt NonRedFams[5]} forms an irreducible representative family with $y_2 =1$; we can see this by adapting and running the algorithm for Step 3b in Section~\ref{sec:2Jordanblock} in this case. \qed \par}
\medskip

{\normalsize The geometric parameterization  of the equivalence classes of irreducible representations of $\mathbb{C}_{-1}[x,y]$ is given as follows; see also Figure~2.

\begin{corollary}  \label{cor:skew} We have the following statements.
\begin{enumerate}
\item We have that the center $Z$ of $\mathbb{C}_{-1}[x,y]$ is the commutative polynomial ring generated by $u_1:=x^2$ and $u_2:=y^2$.
\item The set of equivalence classes of irreducible representations of $S$ are in bijective correspondence with the set of maximal ideals of $\mathbb{C}[x^2, y^2]$. 
\item The geometric parameterization of the set of equivalence classes of irreducible representations of $\mathbb{C}_{-1}[x,y]$ is the 2-dimensional affine space $\mathbb{C}^2_{\{u_1, u_2\}}$. In particular: 
\begin{itemize} 
\item points of $\mathbb{C}^2 \setminus \mathbb{V}(u_1u_2)$ correspond to irreducible 2-dimensional representations of $\mathbb{C}_{-1}[x,y]$, 
\item points on the axes $\mathbb{V}(u_1u_2)$ not equal to the origin correspond to non-trivial 1-dimensional representations of $\mathbb{C}_{-1}[x,y]$, and 
\item the origin corresponds to the trivial representation of $\mathbb{C}_{-1}[x,y]$.
\end{itemize}
\end{enumerate}
\end{corollary}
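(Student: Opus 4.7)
The plan is to handle the three parts in turn, using direct computation for (a) and (c) and the PI-theoretic template of Theorem~\ref{thm:geometry}(b) for part (b). For part (a), use the PBW-type basis $\{x^i y^j\}_{i,j \ge 0}$ of $\mathbb{C}_{-1}[x,y]$ coming from the relation $xy = -yx$. Directly compute $x\cdot y^2 = -yx\cdot y = y^2 x$ and symmetrically $y\cdot x^2 = x^2 y$, so $\mathbb{C}[x^2, y^2] \subseteq Z$. Conversely, commuting $x$ past $y^j$ picks up a sign $(-1)^j$, so imposing $[x, \sum c_{ij} x^i y^j] = 0$ forces $c_{ij} = 0$ whenever $j$ is odd; symmetrically $[y, \cdot] = 0$ forces $i$ even. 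Since $x^2$ and $y^2$ are algebraically independent, $Z = \mathbb{C}[x^2, y^2]$ is polynomial.

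\smallskip

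For part (b), observe that $\mathbb{C}_{-1}[x,y]$ is module-finite over $Z$ (generated as a $Z$-module by $1, x, y, xy$) and hence PI. I would then mirror the proof of Theorem~\ref{thm:geometry}(b): the chain of bijections $[\mathrm{Irrep}] \leftrightarrow \mathrm{prim} \leftrightarrow \mathrm{maxSpec}(\mathbb{C}_{-1}[x,y])$ follows because equivalent reps share kernels and (by \cite[Theorem~I.13.5]{BGbook} together with Skolem--Noether) every primitive quotient is a matrix ring whose irreps are all equivalent to one another. The contraction $\eta: M \mapsto M \cap Z$ is surjective onto $\mathrm{maxSpec}(Z)$ by \cite[Proposition~III.1.1(5)]{BGbook}; injectivity off the axes follows from the Azumaya-locus argument verbatim, and on the axes by matching against the 1-dim classification in Proposition~\ref{prop:skew}.

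\smallskip

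For part (c), apply each representative in Proposition~\ref{prop:skew} to the central generators $u_1 = x^2$ and $u_2 = y^2$ to read off the central character: $\psi_{\alpha,\beta}$ gives $(u_1, u_2) = (\alpha^2, \beta)$ with $\alpha\beta \ne 0$, sweeping $\mathbb{C}^2 \setminus \mathbb{V}(u_1 u_2)$; the 1-dim reps $\rho_\alpha, \rho_\beta$ sweep the two axes minus the origin; and the trivial rep sits at the origin. Combined with (b), this yields the stated geometric parameterization on the smooth affine plane $\mathbb{C}^2_{\{u_1, u_2\}}$.

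\smallskip

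The delicate point is injectivity of $\eta$ on the axes in part (b): strictly speaking, $\rho_\alpha$ and $\rho_{-\alpha}$ have distinct kernels in $\mathbb{C}_{-1}[x,y]$ but identical central character $(u_1 - \alpha^2, u_2) \subset Z$, so the fibers of $\eta$ over the coordinate axes are doubled. A careful proof must either identify these reps via the algebra automorphism $x \mapsto -x$, $y \mapsto y$, or else read the correspondence in (b) through central characters rather than literal kernels; I would flag this subtlety explicitly and, if necessary, slightly rephrase~(b) so that the bijection is unambiguous.
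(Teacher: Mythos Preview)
Your proof follows the paper's own argument essentially verbatim: part (a) via the monomial basis $\{x^iy^j\}$ and the sign computation forcing $i,j$ even, part (b) by deferring to the PI-theoretic bijection in the proof of Theorem~\ref{thm:geometry}(b), and part (c) by reading off central characters from the representatives in Proposition~\ref{prop:skew} with $u_1=\alpha^2$, $u_2=\beta$.

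Where you go beyond the paper is in your final paragraph. The paper's proof of (b) is the single sentence ``This follows by the proof of Theorem~\ref{thm:geometry}(b),'' and it does not confront the point you raise: that $\rho_\alpha$ and $\rho_{-\alpha}$ are inequivalent $1$-dimensional representations with the same central character $(u_1,u_2)=(\alpha^2,0)$. In Theorem~\ref{thm:geometry}(b) this difficulty never arises because the only $1$-dimensional irrep of $S(1,1,c)$ is trivial, so the Azumaya-locus argument covers everything except the origin; for $\mathbb{C}_{-1}[x,y]$ the non-Azumaya locus is the pair of axes and the contraction $\eta$ is genuinely $2$-to-$1$ there. Your diagnosis is correct, and your suggested remedies (either pass to equivalence under the grading automorphisms $x\mapsto -x$, $y\mapsto -y$, or read (b) as a central-character correspondence) are the right ones. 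Note, incidentally, that the analogous redundancy is already present in the $2$-dimensional family: $\psi_{\alpha,\beta}\sim\psi_{-\alpha,\beta}$ via conjugation by $\bigl(\begin{smallmatrix}0&1\\\beta&0\end{smallmatrix}\bigr)$, which is why the map $(\alpha,\beta)\mapsto(\alpha^2,\beta)$ lands bijectively on $\mathbb{C}^2\setminus\mathbb{V}(u_1u_2)$.
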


\begin{proof}
(a) The algebra $\mathbb{C}_{-1}[x,y]$ has a $\C$-vector space basis given by $\{x^i y^j | i, j \in \mathbb{N}\}$.  Since $(x^i y^j) x = (-1)^j x^{i+1} y^j = x^{i+1} y^j$ and $y(x^i y^j) = (-1)^i x^i y^{j+1} = x^i y^{j+1}$ implies that $i, j$ are even, the result is clear.
\medskip

(b) This follows by the proof of Theorem~\ref{thm:geometry}(b).
\medskip

(c) The first statement follows as Spec($Z$) = $\mathbb{C}^2_{\{u_1, u_2\}}$. Now the remaining statements hold by~\eqref{eq:C-1x,y2} and~\eqref{eq:C-1x,y1} where $u_1 = \alpha^2$ and $u_2 = \beta$.
\end{proof}

\begin{figure}[h] 
\centering

\includegraphics[width=0.32\textwidth]{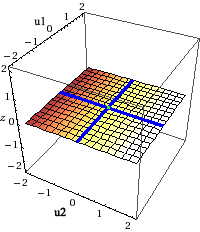} 
\caption{Affine 2-space parametrizing irreps of $\C_{-1}[x,y]$; axes parametrize 1-dimensional irreps}
\end{figure} 

\section*{Acknowledgments}
This project was inspired by the second author's conversation with Andrew Morrison and Bal\'{a}zs Szendr\H{o}i at the Interactions between Algebraic Geometry and Noncommutative Algebra workshop at the  Mathematisches Forchungsinstitut Oberwolfach in May 2014.  The authors are grateful to the referees of our of submission, particularly to a referee who discussed with us a Clifford-theoretic approach to this problem (as mentioned in   Remark \ref{rem:intro}).
Travel for the first author was supported by an Undergraduate Research Supplement of  the second author's National Science Foundation grant DMS-1550306.


\bibliography{Irreps-bibtex}

\end{document}